\documentclass[12pt,a4paper]{article}

\usepackage{amsfonts}
\usepackage{amsmath}
\usepackage{amsbsy}
\usepackage{theorem}
\usepackage{graphicx}

\newtheorem{definition}{Definition}
\newtheorem{theorem}{Theorem}
\newtheorem{proposition}{Proposition}

\newtheorem{remark}{Remark}

\begin{document}

\title{Noncentral limit results for spatiotemporal random fields  on manifolds and beyond}
\sf
\date{}

 \maketitle

\centerline{\bf  M.D. Ruiz-Medina}
\bigskip
\begin{abstract}
This paper derives noncentral limit theorems (NCLTs)   for suitable scaling of functionals of  spatially homogeneous and isotropic, and stationary in time, LRD Gaussian subordinated Spatiotemporal Random Fields (STRFs) with Hermite rank equal to two. The cases  of connected and compact two point homogeneous spaces  $\mathbb{M}_{d}\subset \mathbb{R}^{d+1},$  and  compact convex sets $\mathcal{K}\subset \mathbb{R}^{d+1}$ whose interior has
positive Lebesgue measure, are analyzed. These NCLTs are  obtained in the second Wiener Chaos by applying reduction theorems. The methodological approaches adopted in the derivation of these results are based on the pure point and continuous spectra of the Gaussian STRFs subordinators defined on  $\mathbb{M}_{d}$ and $\mathcal{K},$ respectively.
\end{abstract}

\maketitle

\noindent \textbf{Keywords}.  Fredholm determinant; Hermite polynomials; double Wiener-It{\^o} stochastic
integrals; spatiotemporal noncentral limit theorems; Rosenblatt-type
distribution.

.

 \section{Introduction}
 It well--known that Rosenblatt probability distribution attracts sums of nonlinear transformations of strong--correlated sequences, and integrals of LRD stochastic processes and RFs. In this paper, we pay attention to the last case,  where the Rosenblatt--type   probability distribution arises, characterizing the limiting behavior of suitable scalings of linear functionals of LRD Hermite STRFs living in the second Wiener Chaos.
  Reduction theorems extrapolate this asymptotic behavior to linear functionals of Gaussian subordinated STRFs with Hermite rank equal to two.
 The existing vast literature on this topic focuses on stationary time--indexed sequences, stochastic processes, and isotropic RFs on spatial domains.
 A discretized version in time of these limit results first appears in \cite{Rosenblatt},
 and the limit functional version is considered in \cite{Taqqu75} in the form of the Rosenblatt process. The spectral theory based on second--order moments is usually applied in  this
setting, when mean--square convergence is proved, leading to the integral representation of the limit random variable in terms of  a double
Wiener-It{\^o} stochastic integral (see \cite{Dobrushin};
\cite{Taqqu79}). Just to mention a few, we refer to \cite{Albin}, \cite{Anh}, \cite{Fox},
\cite{Ivanov}, \cite{LeonenkoTaufer06}, \cite{Rosenblatt79}, being   relevant classical references in this topic. Alternatively, weak convergence can
be  proved, in terms of the characteristic functions of the elements of the functional sequence. This   approach  was initially  adopted in  \cite{Taqqu75} (see  also \cite{LeonenkoTaufer06}  for functions of quadratic forms of
strongly-correlated Gaussian random variables sequences). In \cite{LeonenkoRuizMedinaTaqqu17},   the limit  characteristic function is  obtained from  Fredholm determinant formula via   the Karhunen--Lo\'eve expansion of the spatially homogeneous and isotropic LRD Gaussian RF subordinator  on a  spatial compact convex set.  In \cite{Leonenkoetal17}, this  formulation is extended to the Laguerre Chaos.
In this paper,  the  limit characteristic function is obtained from   Fredholm determinant  formula (see Proposition 1.2.8 in \cite{DaPrato}), involving the positive integer powers of the eigenvalues   of the  restriction of the spatiotemporal Riesz potential to a spatiotemporal compact set.

  The spatiotemporal formulation of the above cited   limit results,  for LRD Gaussian STRF subordinators, has been considered in \cite{LeonenkoRuizMedina23} and
\cite{LeonenkoRuizMedina25}, where spatiotemporal reduction theorems, and CLTs are obtained. Note that,  in \cite{LeonenkoRuizMedina23}, increasing domain asymptotic in time for a fixed compact  domain in space is analyzed.  While, in \cite{LeonenkoRuizMedina25},   increasing domain asymptotic  in time and space is addressed, including  CLTs for   moving level sojourn measures (see also \cite{Caponera25}  and \cite{MarinucciRV}   on limit results for sojourn measures of chi--squared STRFs on compact two point homogeneous spaces, and of Gaussian  STRFs on sphere,  respectively).

The present paper extends to the spatiotemporal context NCLTs derived in \cite{LeonenkoRuizMedinaTaqqu17}  and \cite{Leonenkoetal17}, adopting the methodological approach introduced in \cite{LeonenkoRuizMedina23} and  \cite{LeonenkoRuizMedina25}. Particularly, we consider  the construction of  functionals, via integrals of Gaussian subordinated STRFs with Hermite rank equal to two,  defined on an  increasing spatiotemporal compact domain family. A  homothetic transformation of the spatial domain, with scaling factor  involving a power law of the increasing length of the temporal interval, defines  this family. We pay attention to the cases of spherical STRFs, and STRFs on compact convex sets with positive Lebesgue measure.
In the first case, we compute the limit characteristic function, and exploit the second--order pure point  spectral properties of the family of  restricted Gaussian STRF subordinators. The novelty of the approach presented here relies on the application of  the spatiotemporal Karhunen--Lo\'eve expansion of the Gaussian STRF subordinator restricted to the family of increasing spatiotemporal domains. The scaling factor is then computed from the application of trace formula on these domains, allowing the connection between pure point spectral properties of the restricted covariance operator, and the asymptotic properties  of the original unrestricted second--order model. The new  limit random variable  obeying a  Rosenblatt--type  probability distribution is given by an infinite weighted sum of chi-squared  distributions with  increasing degrees of freedom,  given by the dimension of the eigenspaces of the Laplace--Beltrami operator  $\Delta_{d}$ on the sphere. Indeed, its characteristic function  can be interpreted as an infinite product of Fredholm determinants corresponding to  the temporal evolution of the projected STRF into each eigenspace of the Laplace Beltrami operator. This limit  formula of the characteristic function is given in terms of the  cumulants,  defined  from  the weights involved in the  random series characterizing the Rosenblatt--type  limit random variable.  These weights display  the asymptotic separability in space and time assumed for the original unrestricted Gaussian STRF model.

The spatiotemporal mean--square convergence to  the Rosenblatt--type  probability distribution is also derived. The cases of  compact convex spatial domains with positive and null  Lebesgue measure in $\mathbb{R}^{d+1}$  are  considered.  In the case of
  compact convex sets of positive Lebesgue measure, the continuous spectral properties    of the unrestricted covariance operator of the Gaussian STRF subordinator are exploited. In this case, the limit  Rosenblatt--type distribution admits a spatiotemporal double Wiener-It{\^o} stochastic integral representation.
    In the case of manifolds, like the sphere, the invariance against the group of isometries of the manifold,  and the pure point spectral properties of the restricted spatiotemporal covariance models are applied, via the spatiotemporal Karhunen--Lo\'eve expansion of the underlying  restricted Gaussian STRFs.

The outline of the paper is the following. Section \ref{prel1} introduces notation and some preliminary definitions. In Section \ref{ncltchf}, we prove convergence of the characteristic functions of the spherical STRF based  functionals, on an   increasing spatiotemporal domain family,  to the  characteristic function of a Rosenblatt--type distributed random variable.  Section \ref{ncltspectr} provides the spatiotemporal convergence in  the mean--square sense  to a Rosenblatt--type distribution in the case of compact convex sets with positive Lebesgue measure. In Section \ref{mssphere},
the mean--square  convergence for STRFs on the sphere is addressed. Finally, Section \ref{discussion} summarizes main contributions, and related open problems to be addressed in a near future.

 \section{Preliminaries}
 \label{prel1}
Let  $\mathbb{T}\subset \mathbb{R}$ be a bounded closed interval. As before, $\mathbb{M}_{d}$ denotes  a connected  and compact two--point homogeneous space. Note that $\mathbb{M}_{d}$ constitutes an example of manifold, with isometrically equivalent  properties to the sphere, locally resembles an Euclidean space.
Here, $d$ represents  its topological dimension. In what follows we denote by  $L^{2}\left(\mathbb{M}_{d}\times \mathbb{T},d\nu\otimes dt\right)$  the separable  Hilbert space of square integrable functions on  $\mathbb{M}_{d}\times \mathbb{T},$ with  $dt=|\mathbb{T}|\lambda (dt),$ and being
$|\mathbb{T}|$ the Lebesgue measure of  $\mathbb{T},$ and    $\lambda (dt)$   the uniform measure on temporal  interval $\mathbb{T}.$  Notation $d\nu $ means that we are integrating with respect to the corresponding non--normalized  Riemannian measure.

Let $(\Omega ,\mathcal{A},P)$ be the basic probability space, and denote by $\mathcal{L}^{2}(\Omega ,\mathcal{A},P)$ the space of zero--mean random variables on  $(\Omega ,\mathcal{A},P)$ with finite second--order moments. Consider
$Z:=\{ Z(\mathbf{x},t),\ \mathbf{x}\in \mathbb{R}^{d+1},\ t\in \mathbb{R}\}$ to  be a wide sense stationary in time, and homogeneous and isotropic in space,  zero--mean,  and mean--square continuous Gaussian STRF with covariance kernel $C_{Z},$ and variance one.
In the following,  we denote by
$\left\{\mathbb{M}_{d}(T^{\gamma })\times \mathbb{T}(T),\ T\geq 1,\ \gamma\geq 0\right\}$  a family of spatiotemporal compact sets  with $\mathbb{M}_{d}(T^{\gamma })=T^{\gamma }\mathbb{M}_{d},$ and $\mathbb{T}(T)=[-T,T].$

 In what follows, we work under the next  assumption, providing the diagonal series  representation of the spatiotemporal covariance kernel
\begin{eqnarray}&&C_{\mathbb{M}_{d}(T^{\gamma })\times \mathbb{T}(T)}(\mathbf{x},\mathbf{y},t,s):=E\left[ Z_{\mathbb{M}_{d}(T^{\gamma })\times \mathbb{T}(T)}(\mathbf{x},t)Z_{\mathbb{M}_{d}(T^{\gamma })\times \mathbb{T}(T)}(\mathbf{y},s)\right]\nonumber\\
&&=\mathbb{I}_{\mathbb{M}_{d}(T^{\gamma })\times \mathbb{T}(T)}(\mathbf{x},t)\mathbb{I}_{\mathbb{M}_{d}(T^{\gamma })\times \mathbb{T}(T)}(\mathbf{y},s)C_{Z}(\|\mathbf{x}-\mathbf{y}\|,|t-s|),\nonumber\\ &&\hspace*{4cm} \forall  \mathbf{x},\mathbf{y}\in \mathbb{R}^{d+1},\quad  t,s\in \mathbb{R},\nonumber
\end{eqnarray}
\noindent where $$Z_{\mathbb{M}_{d}(T^{\gamma })\times \mathbb{T}(T)}:=\mathbb{I}_{\mathbb{M}_{d}(T^{\gamma })\times \mathbb{T}(T)}(\mathbf{x}, t)Z(\mathbf{x}, t),\ \mathbf{x}\in \mathbb{R}^{d+1},\ t\in  \mathbb{R},$$\noindent is the restriction of  $Z$  to  $\mathbb{M}_{d}(T^{\gamma })\times \mathbb{T}(T),$ for any $T\geq 1.$ Here, $\mathbb{I}_{\mathbb{M}_{d}(T^{\gamma })\times \mathbb{T}(T)}$ is the indicator function of the set $\mathbb{M}_{d}(T^{\gamma })\times \mathbb{T}(T).$

\medskip

\noindent \textbf{Assumption A0}.
Assume that, for each $T\geq 1,$ the continuous covariance function \linebreak $C_{\mathbb{M}_{d}(T^{\gamma })\times \mathbb{T}(T)}(\mathbf{x},\mathbf{y},t,s)$  of the restriction  $Z_{\mathbb{M}_{d}(T^{\gamma })\times \mathbb{T}(T)}$ to $\mathbb{M}_{d}$ of $Z$
admits the following diagonal series expansion:
 \begin{eqnarray}&&
 \hspace*{-0.7cm}C_{\mathbb{M}_{d}(T^{\gamma })\times \mathbb{T}(T)}(\mathbf{x},\mathbf{y},t,s)=
  \sum_{n\in \mathbb{N}_{0}}B_{n}(0,T^{\gamma })\sum_{k\geq 1}B_{n,k}(T)\nonumber\\
  &&\hspace*{3.5cm}+
  \sum_{j=1}^{\Gamma (n,d)}
S_{n,j}^{d}(\mathbf{x})S_{n,j}^{d}(\mathbf{y})\phi_{k}^{\mathbb{T}(T)}(t)\phi^{\mathbb{T}(T)}_{k}(s),
 \label{klexpc2}
\end{eqnarray}
\noindent for every $t,s \in \mathbb{T}(T),$ and $\mathbf{x}, \mathbf{y}\in \mathbb{M}_{d}(T^{\gamma }),$
with   $\Gamma (n,d)$ denoting  the dimension of the $n$th eigenspace of the Laplace Beltrami operator on $L^{2}(\mathbb{M}_{d},d\nu)$. Along the paper we will denote by   $\left\{S_{n,j}^{d},\ j=1,\dots,\Gamma(n,d),\ n\in \mathbb{N}_{0}\right\}$ and $\left\{ \phi_{k}^{\mathbb{T}(T)},\ k\geq 1\right\}$  the eigenfunctions of the Laplace--Beltrami operator,  and   an orthonormal basis of $L^{2}(\mathbb{T}(T),dt),$ respectively.

\subsection{Examples}
\label{examples}
\textbf{Assumption A0} is satisfied, for example, by the Gneiting class of spatiotemporal  covariance functions  (see \cite{Gneiting02}) restricted to  $\mathbb{S}_{d}(T^{\gamma })\times \mathbb{T}(T),$ with  $\mathbb{S}_{d}(T^{\gamma })=\left\{ \mathbf{x}\in \mathbb{R}^{d+1}; \|\mathbf{x}\|=T^{\gamma }\right\} ,$ $T\geq 1$. Specifically, for the case $d=3,$ and  $T=1,$ we have
  \begin{eqnarray}&&
  \hspace*{-0.7cm} C\left( 2\sin \left( \frac{\theta }{2}\right),\tau \right)=\frac{\sigma ^{2}}{[\psi (\tau ^{2})]^{d/2}}
\varphi \! \left( \frac{\left[2\sin \left( \frac{\theta }{2}\right)\right]^{2}}{\psi (\tau ^{2})}\right),\
\sigma ^{2}\geq 0,\ \theta \in [0,\pi],\ \tau \in  \mathbb{T}(1),\nonumber\\
  \label{rgneiting}
  \end{eqnarray}
\noindent which is  isotropic in space, and stationary in time. Function  $\varphi (u),$ $u\geq 0$ is a completely monotone function, that is, it
possesses derivatives $\varphi ^{\left( n\right) }$ of all orders and they
alter in sign: $\left( -1\right) ^{n}\varphi ^{\left( n\right) }\left(
u\right) \geq 0$ for $u>0$ and $n=0,1,2,...$ We also assume that $\psi (u),u\geq 0,$ is a Bernstein
function, that is, a  positive function with a
completely monotone derivative.

The  invariance of covariance kernel family (\ref{rgneiting}),  under translations in time and rotations in space, allows the diagonalization of their restricted elements  in time and space  in terms of the tensorial product of the bases of  complex exponential  indexed by $\mathbb{Z},$   and    spherical harmonics basis
$\left\{S_{n,j}^{d},\ j=1,\dots,\Gamma(n,d),\ n\in \mathbb{N}_{0}\right\}$.  See, for example,  Chapters 2 and 3 in \cite{Marinucci} on invariance properties, in the context of  locally compact Abelian groups (applied here to time), and compact and non-Abelian Lie  groups (applied here to space), respectively.

Let us pay attention to the following parametric subfamilies of the restricted Gneiting class on the sphere introduced in equation  (\ref{rgneiting}).
\subsubsection*{Example 1} This example corresponds to the case where functions  $\varphi $ and $\psi$  in equation (\ref{rgneiting}) are given by
\begin{eqnarray} \varphi \left(\frac{\left[2\sin \left( \frac{\theta }{2}\right)\right]^{2}}{\psi (\tau ^{2})}\right)&=&\frac{1}{\left(1+c\left(\frac{\left[2\sin \left( \frac{\theta }{2}\right)\right]^{2}}{\left(1+a\tau ^{2\alpha }\right)^{\beta }}\right)^{\widetilde{\gamma }}\right)^{\nu }},\quad  \theta \in [0,\pi],  \ c>0,\ 0<\widetilde{\gamma } \leq 1,\ \nu >0,\nonumber\\
\psi (\tau ^{2})&=&\left(1+a\tau ^{2\alpha }\right)^{\beta },\quad  \tau \in \mathbb{T}(1), \ a>0,\ 0<\alpha \leq 1,\ 0<\beta \leq 1.
\label{A1}
\end{eqnarray}

\noindent This covariance function subfamily displays LRD in time.

\subsubsection*{Example 2}
An equivalent LRD asymptotic behavior in time  is observed when we  consider
\begin{eqnarray}
\varphi  \left(\frac{\left[2\sin \left( \frac{\theta }{2}\right)\right]^{2}}{\psi (\tau ^{2})}\right)&=& E_{\widetilde{\nu}  ,1}\left( -\left(\frac{\left[2\sin \left( \frac{\theta }{2}\right)\right]^{2}}{\left(1+a\tau ^{2\alpha }\right)^{\beta }}\right)^{\widetilde{\gamma }}\right),\quad  \theta \in [0,\pi],\ 0<\widetilde{\gamma }\leq 1,\ 0<\widetilde{\nu} \leq 1,
\nonumber\\
\psi (\tau ^{2})&=&\left(1+a\tau ^{2\alpha }\right)^{\beta },\quad \tau \in \mathbb{T}(1),\ a>0,\ 0<\alpha \leq 1,\ 0<\beta \leq 1.
\end{eqnarray}

\noindent
Here, $E_{\widetilde{\nu} ,1}$ denotes the Mittag--Leffler function, which is a well-known  two-parameter function defined by the following series expansion:
\begin{equation*}
E_{\widetilde{\nu}  ,\varrho }(-u)=-\sum_{k=1}^{N}\frac{(-u)^{-k}\Gamma (\varrho )}{%
\Gamma (\varrho -\widetilde{\nu}  k)}+O\left( \frac{1}{|u|^{N+1}}\right).
\end{equation*}
\noindent We consider the special case
$$E_{\widetilde{\nu}}(-z)=\sum_{k=0}^{\infty}\frac{(-1)^{k}z^{k}}{\Gamma(\widetilde{\nu}  k+1)},\quad z\geq 0$$ \noindent  (see, e.g.,  \cite{Gorenfloetal2014}).
The LRD behavior in time displayed by this subfamily of spatiotemporal covariance functions can be characterized in
terms of the following two-sided estimates:
\begin{eqnarray}
\frac{1}{1+\Gamma (1-\widetilde{\nu} )x} &\leq & E_{\widetilde{\nu} ,1}(-x)\leq \frac{1}{1+[\Gamma (1+\widetilde{\nu} )]^{-1}x},
\label{eqimlf}
\end{eqnarray}
\noindent for $x\in \mathbb{R}_{+},$ and $\widetilde{\nu} \in (0,1)$  (see  \cite{Simon2014}, Theorem 4).

\subsubsection*{Example 3}
Another interesting example is given by the following choice of functions $\varphi$ and  $\psi,$ related to the
Whittle-Mat\'ern covariance family:

\begin{eqnarray}
 \varphi \left(\frac{\left[2\sin \left( \frac{\theta }{2}\right)\right]^{2}}{\psi (\tau ^{2})}\right)&=&
\left( 2^{\nu -1}\Gamma \left( \nu \right) \right) ^{-1}\left(
c\left(\frac{\left[2\sin \left( \frac{\theta }{2}\right)\right]^{2}}{\left(1+a\tau ^{2\alpha }\right)^{\beta }}\right)^{1/2}\right) ^{\nu }
\nonumber\\ &\times & K_{\nu }\left( c\left(\frac{\left[2\sin \left( \frac{\theta }{2}\right)\right]^{2}}{\left(1+a\tau ^{2\alpha }\right)^{\beta }}\right)^{1/2}\right), \quad \theta \in [0,\pi], \ \text{ }c>0,\text{ }%
\nu >0,\nonumber\\
\psi (\tau ^{2})&=&\left(1+a\tau ^{2\alpha }\right)^{\beta },\quad \tau \in \mathbb{T}(1),\ a>0,\text{ }0<\alpha \leq 1,
\text{}  0<\beta \leq 1,\nonumber\\
\label{A2}
\end{eqnarray}
\noindent displaying a fractal behavior  in space, where $K_{\nu }(z)$ is the modified Bessel function of the
second kind of order $\nu.$

 \subsection{Karhunen-Lo\'eve expansion and  Fredholm determinant formula}

  Applying   invariance  under the group of isometries of  $\mathbb{I}_{\mathbb{M}_{d}(T^{\gamma })}(\mathbf{x})\mathbb{I}_{\mathbb{M}_{d}(T^{\gamma })}(\mathbf{y})C_{Z}(\|\mathbf{x}-\mathbf{y}\|,|t-s|),$
\begin{eqnarray}&&
\mathbb{I}_{\mathbb{M}_{d}(T^{\gamma })}(\mathbf{x})\mathbb{I}_{\mathbb{M}_{d}(T^{\gamma })}(\mathbf{y})C_{Z}(\|\mathbf{x}-\mathbf{y}\|,|t-s|)=
\sum_{n\in \mathbb{N}_{0}}B_{n}(t-s, T^{\gamma })\sum_{j=1}^{\Gamma (n,d)}
 S_{nj}^{(d)}(\mathbf{x})S_{n,j}^{d}(\mathbf{y}),\nonumber\\
 \label{igir}\end{eqnarray}
\noindent where, for any $t,s\in \mathbb{R},$  \begin{eqnarray}&&B_{n}(t-s, T^{\gamma })=\nonumber\\ &&\hspace*{-0.7cm}=E\left[ \left\langle Z(\cdot ,t), S_{nj}^{(d)}(\cdot)\right\rangle_{L^{2}(\mathbb{M}_{d}(T^{\gamma })\times \mathbb{T}(T), d\nu\otimes dt)}\left\langle Z(\cdot ,s),S_{nj}^{(d)}(\cdot)\right\rangle_{L^{2}(\mathbb{M}_{d}(T^{\gamma })\times \mathbb{T}(T), d\nu\otimes dt)}\right].\nonumber\end{eqnarray}

The orthonormal basis  $\left\{\phi_{k}^{\mathbb{T}(T)},\ k\geq 1\right\}$  of $L^{2}(\mathbb{T}(T),dt)$ in (\ref{klexpc2}) is such that
\begin{eqnarray}\int_{\mathbb{T}(T)}\widetilde{B}_{n}(t,s)\phi_{k}^{\mathbb{T}(T)}(s)ds &=&\int_{0}^{T}\frac{B_{n}(t-s,T^{\gamma })}{B_{n}(0, T^{\gamma })}\phi^{\mathbb{T}(T)}_{k}(s)ds
\nonumber\\
&=& B_{n,k}(T)\phi_{k}^{\mathbb{T}(T)}(t),\ t\in \mathbb{T}(T),\ k\geq 1,  \ n\geq 0.\label{autoeq}
\end{eqnarray}

From (\ref{igir}), for each finite $T\geq 1,$   the following  spatiotemporal   Karhunen--Lo\'eve expansion holds, with convergence in $L^{2}(\Omega \times\mathbb{M}_{d}( T^{\gamma })\times \mathbb{T}(T),\mathcal{P}\otimes d\nu\otimes dt):$
\begin{eqnarray}\mathbb{I}_{\mathbb{M}_{d}(T^{\gamma })\times \mathbb{T}(T)}(\mathbf{x},t) Z(\mathbf{x},t)&\underset{D}{=}&\sum_{n=0}^{\infty}\sum_{k\geq 1}\sqrt{B_{n}(0, T^{\gamma })B_{n,k}}(T)\nonumber\\
&&\hspace*{0.7cm}+\sum_{j=1}^{\Gamma (n,d)}\eta_{n,j,k} S_{nj}^{(d)}(\mathbf{x})\phi_{k}^{\mathbb{T}(T)}(t),\label{KLexp}\end{eqnarray}
\noindent   where  $\underset{D}{=}$ denotes equality in distribution, which also holds in view of the convergence in the mean--square sense, with $\eta_{n,j,k},$ $j=1,\dots,\Gamma (n,d),$ $n\in \mathbb{N}_{0},$ $k\geq 1,$ being independent and identically distributed standard Gaussian random variables.
\begin{remark}
\label{rem1}
It is important to note that, in this paper, for each $T\geq 1,$  we exploit the  invariance properties of the restriction to $\mathbb{M}_{d}(T^{\gamma })\times \mathbb{T}(T)$ (respectively, to $\mathbb{S}_{d}(T^{\gamma })\times \mathbb{T}(T)$)  of a  homogeneous and isotropic in space, and stationary in time, STRF on $\mathbb{R}^{d+1}\times \mathbb{R}.$ Specifically, this restriction is a subclass of the family of isotropic STRFs on $\mathbb{M}_{d}(T^{\gamma })$ (respectively, on $\mathbb{S}_{d}(T^{\gamma })$), which are stationary in time.
\end{remark}

\begin{remark}
Given the asymptotic separability in space and time  assumed in \textbf{Assumption A1} below, the case of $\left\{\phi_{k}^{\mathbb{T}(T)}(n),\ k\geq 1,\ n\geq 0\right\}$ is not analyzed here, but similar results hold in this more general scenario, under the condition of   biorthonormality of the bases $\left\{\phi_{k}^{\mathbb{T}(T)}(n),\ k\geq 1,\ n\in \mathbb{N}_{0},\right\}$.
\end{remark}

We now introduce the Fredholm determinant of an
operator $A,$ as a complex-valued function which generalizes the
determinant of a matrix.

\begin{definition}
\label{def1} (see, for example, \cite{Simon05},  Chapter 5,  pp. 47-48,
equation (5.12)) Let $A$ be a trace operator on a separable Hilbert
space $H.$ The Fredholm determinant of $A$ is
\begin{equation}
\mathcal{D}(\omega )=\mbox{det}(I-\omega A)=\exp\left(-\sum_{k=1}^{\infty }%
\frac{\mbox{Tr}A^{k}}{k}\omega^{k}\right)=\exp\left(-\sum_{k=1}^{\infty
}\sum_{l=1}^{\infty}|\lambda_{l}(A)|^{k}\frac{\omega^{k}}{k}\right),
\label{fdf}
\end{equation}
\noindent for $\omega \in \mathbb{C},$ where $\left\{\lambda_{l}(A),\ l\geq 1\right\}$ denotes the sequence of singular values of $A,$  and $|\omega |\|A\|_{1}< 1.$ Here, we have applied the identity $$\|A^{k}\|_{1}:=\mbox{Tr}(A^{k})=\sum_{l\geq 1}|\lambda_{l}(A)|^{k},\quad k\geq 1.$$ Note
that $\| A^{m}\|_{1}\leq \|A\|_{1}^{m},$ for $A$ being a trace operator.
\end{definition}

From  equation (\ref{KLexp}),  Parseval identity leads to

\begin{equation}
\int_{\mathbb{T}(T)}\int_{\mathbb{M}_{d}(T^{\gamma })}Z^{2}(\mathbf{x},t)d\nu(\mathbf{x})dt =
\sum_{n=0}^{\infty }\sum_{j=1}^{\Gamma (n,d)}\sum_{k\geq 1}B_{n}(0,T^{\gamma })B_{n,k}(T)\eta _{n,j,k}^{2}.
\label{Ysquare}
\end{equation}

From equation (\ref{Ysquare}), considering Definition \ref{def1},
 we derive the expression of the characteristic function family $\left\{ \varphi_{T} ,\ T\geq 1\right\},$ given by:
\begin{eqnarray}
&& \varphi_{T} (\xi ):=\mathrm{E}\left[ \exp \left( \mathrm{i}\xi \int_{\mathbb{T}(T)}\int_{\mathbb{M}_{d}(T^{\gamma })}Z^{2}(\mathbf{x},t)d\nu(\mathbf{%
x})dt\right) \right] \nonumber\\&&=\prod_{n=0}^{\infty }\prod_{k\geq 1}\left(1-2\mathrm{i}\xi B_{n}(0,T^{\gamma })B_{n,k}(T)\right)^{-\Gamma(n,d)/2}
\nonumber\\
&&=\exp \left( \frac{1}{2}\sum_{m=1}^{\infty }\frac{(2\mathrm{i}\xi )^{m}}{m}
\sum_{n=0}^{\infty }\Gamma(n,d)[B_{n}(0,T^{\gamma })]^{m}\sum_{k\geq 1}[B_{n,k}(T)]^{m}\right), \quad  T\geq 1, \nonumber\\
\label{tvarcf}
\end{eqnarray}
\noindent for $\xi$ such that $$\left|\xi\right|<\frac{1}{2\left[\sum_{n=0}^{\infty }\sum_{k\geq 1}\Gamma(n,d)B_{n}(0,T^{\gamma })B_{n,k}(T)\right]},$$
 \noindent where  the sequences $\left\{B_{n,k}(T),\ k\geq 1,\ n\geq 0\right\}$  and $\left\{B_{n}(0,T^{\gamma }),\ n\geq 0\right\}$ have been introduced in  (\ref{klexpc2}). In the case of the sphere,
$B_{n}(0,T^{\gamma }),$ $n\geq 0,$  are given by
\begin{eqnarray}
&&B_{n}(0,T^{\gamma })=2\Gamma \left( \frac{d}{2}\right)\pi^{d/2}\int_{0}^{\infty}\int_{0}^{\infty}\left[\frac{J_{n+\frac{d-1}{2}}(T^{\gamma }\lambda )}{T^{\gamma }\lambda^{\frac{d-1}{2}}}\right]^{2}\mathcal{G}_{Z}(d\lambda ,d\omega ),\quad  n\in
\mathbb{N}_{0}.\nonumber\\
\label{ccf}
\end{eqnarray}
\noindent  Note that, under spatial  isotropy,  one can consider  the one-dimensional   spectral measure $\mathcal{G}_{Z},$ given by
$$\mathcal{G}(\lambda ,\mu )=\int_{\|\boldsymbol{\xi }\|< \lambda }\int_{|\varrho |<\mu}f_{Z}(\boldsymbol{\xi }, \varrho )d\varrho d\boldsymbol{\xi },\quad \forall  (\lambda ,\mu )\in \mathbb{R}_{+}^{2},$$ \noindent  in terms of
  $$f_{Z}\left(\|\boldsymbol{\xi} \|,\omega \right)=\int_{\mathbb{R}^{d+1}\times \mathbb{R}}\exp(-i\omega \tau)\exp\left(-i\left\langle \boldsymbol{\xi},\mathbf{x} \right\rangle\right)C_{Z}(\mathbf{x},\tau)d\mathbf{x}d\tau$$ \noindent defining the spatiotemporal Fourier transform of the unrestricted spatiotemporal covariance kernel  $C_{Z}$ (see \cite{Ivanov}, for more details).  In equation (\ref{ccf}),  we have denoted by
\begin{equation*}
J_{\theta}(z)=\sum_{m=0}^{\infty}(-1)^{m}\left(\frac{z}{2}\right)^{2m+\theta}%
\left[m!\Gamma (m+\theta +1)\right]^{-1}
\end{equation*}
\noindent  the Bessel function of the first kind, and  order $\theta>1/2,$ involved in the computation of the characteristic function of the uniform probability distribution in the sphere.

In what follows, we will restrict our attention to the sphere.
For $T=1,$ we obtain
\begin{eqnarray}
&&\varphi_{1}(\xi )=\mathrm{E}\left[ \exp \left( \mathrm{i}\xi \int_{\mathbb{T}(1)}\int_{\mathbb{S}_{d}(1)}Z^{2}(\mathbf{x},t)d\nu(\mathbf{%
x}) dt\right)\right] \nonumber\\
&&=\prod_{n=0}^{\infty }\prod_{k\geq 1}\left(1-2\mathrm{i}\xi B_{n}(0,1)B_{n,k}(1)\right)^{-\Gamma(n,d)/2}
\nonumber\\
&&=\prod_{n=0}^{\infty }\exp \left( \frac{\Gamma(n,d)}{2}\sum_{m=1}^{\infty }\frac{(2\mathrm{i}\xi )^{m}}{m}
\sum_{k\geq 1}[B_{n}(0,1)B_{n,k}(1)]^{m}\right) \nonumber\\
&&\hspace*{1cm}=\exp \left( \frac{1}{2} \sum_{m=1}^{\infty }\frac{(2\mathrm{i}\xi )^{m}}{m}\sum_{n=0}^{\infty}\Gamma(n,d)[B_{n}(0,1)]^{m}\sum_{k\geq 1}[B_{n,k}(1)]^{m}\right),
\label{eqfdext}
\end{eqnarray}
\noindent which is finite  under the condition $\left[\sum_{n=0}^{\infty }\sum_{k\geq 1}\Gamma(n,d)B_{n}(0,1)B_{n,k}(1)\right]\left|2\mathrm{i}\xi\right|<1.$ In particular,  equation (\ref{eqfdext}) corresponds to the characteristic function of the sum of a double-indexed weighted   random series of independent  chi--squared probability distributions,  with increasing sequence of degrees of freedom $\left\{ \Gamma(n,d),\ n\in \mathbb{N}_{0}\right\},$ and
  respective  weights defined from the elements of  the double-indexed  sequence $\left\{B_{n}(0,1)B_{n,k}(1),\ k\geq 1,\ n\in \mathbb{N}_{0}\right\}.$

In the next section, we derive a NCLT for the spatiotemporal functional $S_{T},$ given by
\begin{equation}
S_{T}=\frac{1}{d_{T}}\int_{\mathbb{T}(T)}\int_{\mathbb{S}_{d}(T^{\gamma })}(Z^{2}(\mathbf{x},t)-1)d\nu(\mathbf{x})dt,\quad T\geq 1,
\label{eq2}
\end{equation}
\noindent where $d_{T}=T^{\gamma (d-\alpha_{S})}T^{1-\alpha_{T}}\mathcal{L}_{1}(T)\mathcal{L}_{2}(T^{\gamma }),$  $\gamma \geq 0,$  with  $T^{\gamma }$ denoting, as before,  the  scaling factor of the zero--center homothetic transformation of $\mathbb{S}_{d}.$
 Thus, the variance scaling  $d_{T}$ will depend on the topological dimension $d$ of $\mathbb{S}_{d},$ the parameter $\gamma $ defining the spatial homothetic scaling factor, and on the parameters $\alpha_{T}$ and $\alpha_{S}$ characterizing the asymptotic power law of the unrestricted  spatiotemporal covariance tails. It also depends  on the positive slowly varying functions $\mathcal{L}_{i},$ $i=1,2$  (see \textbf{Assumption A1} below).

\section{NCLT via characteristic function}
\label{ncltchf}
This section provides the assumptions on the  unrestricted spatiotemporal covariance kernel  $C_{Z},$ to define  the spherical STRF setting where our NCLT  in manifolds is derived.
As commented, this result is based on the previous pure point spectral identities obtained, that will be applied in the  computation of the characteristics functions of the elements of $\left\{S_{T},\ T\geq 1\right\},$ as well as of their  limit.

\medskip

\noindent \textbf{Assumption A1}.  As $T\to \infty,$
\begin{eqnarray}&&C_{Z}(T^{\gamma }\mathbf{x},T\tau)\simeq \frac{\mathcal{L}_{1}(T)\mathcal{L}_{2}(T^{\gamma })}{|T\tau|^{\alpha_{T} }\|T^{\gamma }\mathbf{x}\|^{\alpha_{S} }}\nonumber\\ &&=\mathcal{O}\left(\mathcal{L}_{1}(T)\mathcal{L}_{2}(T^{\gamma })T^{-\gamma \alpha_{S}}T^{-\alpha_{T}}\right),\ 0<\alpha_{T}<1/2,\ 0<\alpha_{S}<d/2, \ T\to\infty,\label{cstas}\end{eqnarray}
\noindent where $f_{T}(\cdot) \simeq g_{T}(\cdot),$ $T\to \infty,$  means that the two function sequences converge pointwise to the same   limit, and
$\mathcal{L}_{i}$ $i=1,2,$ are positive slowly varying functions at
infinity such that
\begin{equation}
\lim_{T\rightarrow \infty }\mathcal{L}_{i}(Tu)/\mathcal{L}_{i}(T)=1,\quad u\in \mathbb{R}_{+},\ i=1,2.\label{eq2b}
\end{equation}

In Example 1 in Section \ref{examples}, one can consider the following identities:

\begin{eqnarray}
\varphi (u)&=&\frac{\widetilde{\mathcal{L}}_{2}(u)}{u^{\widetilde{\gamma} \nu}},\quad \widetilde{\mathcal{L}}_{2}(u)=\frac{u^{\widetilde{\gamma } \nu}}{(1+cu^{\widetilde{\gamma }})^{\nu }}\nonumber\\
\psi(u) &=& \left[\frac{\widetilde{\mathcal{L}}_{1}(u)}{u^{\alpha \beta}}\right]^{-1},\quad \widetilde{\mathcal{L}}_{1}(u)=\frac{u^{\alpha \beta}}{(1+au^{\alpha })^{\beta }}.
\label{Taubth}
\end{eqnarray}
\noindent Here,   $\widetilde{\mathcal{L}}_{i},$ $i=1,2,$ are positive continuous slowly varying functions at infinity, satisfying
equation (\ref{eq2b}). Then,  for $d=3,$ in this example,
\begin{eqnarray}&&C_{Z}(T^{\gamma }\mathbf{x},T\tau)= \mathcal{O}\left(\frac{\left[\widetilde{\mathcal{L}}_{1}(T^{2})\right]^{5/2}}{|T\tau|^{5\alpha \beta }}\frac{\widetilde{\mathcal{L}}_{2}(T^{2\gamma })}{\|T^{\gamma }\mathbf{x}\|^{2\widetilde{\gamma } \nu}}\right),\quad T\to \infty,\label{Taubth2}
\end{eqnarray}
\noindent obtaining  that \textbf{Assumption A1} holds for $\alpha_{T}=5\alpha \beta,$ with $0<5\alpha \beta <1/2,$ and $\alpha_{S}=2\widetilde{\gamma } \nu,$ such that  $0<2\widetilde{\gamma }\nu <3/2.$ In particular,   in this example,  $\mathcal{L}_{1}$ and $\mathcal{L}_{2}$ in \textbf{Assumption A1} are respectively  given by $\mathcal{L}_{1}(u)=\left[\widetilde{\mathcal{L}}_{1}(u^{2})\right]^{\frac{3}{2}+1},$
  and $\mathcal{L}_{2}(u)=\widetilde{\mathcal{L}}_{2}(u^{2}).$

  From equation (\ref{eqimlf}), one can similarly prove that the spatiotemporal covariance subfamily introduced in Example 2  satisfies
equation (\ref{cstas}) in \textbf{Assumption A1}, since its asymptotic behavior corresponds to consider  $\nu=1$ in Example 1.

\begin{remark}
Applying a Tauberian Theorem (see \cite{Doukhan},
and Theorems 4 and 11 in \cite{LeonenkoOlenko14}), the Fourier transforms of functions $\varphi $ and $\psi,$ in Examples 1 and 2, satisfy
 \begin{eqnarray}\widehat{\varphi}(\lambda )=\int_{\mathbb{R}^{d}}\exp\left(-i\left\langle \lambda , z\right\rangle \right) \varphi(\|z\|^{2})dz &\sim & c(1,2\widetilde{\gamma } \nu) \frac{\widetilde{\mathcal{L}}_{2}\left( \frac{1}{\|\lambda \|}\right)}{\|\lambda\|^{d-2\widetilde{\gamma } \nu}},\quad \|\lambda \|\to 0\label{asympsd0}\\
 \widehat{\psi}(\omega )=\int_{\mathbb{R}}\exp\left(-i\left\langle \omega , t\right\rangle \right) \psi(|t|^{2})dt &\sim &  c(1,2\alpha \beta)\frac{\widetilde{\mathcal{L}}_{1}\left( \frac{1}{|\omega|}\right)}{|\omega|^{1-2\alpha \beta}},\quad |\omega | \to 0,
  \label{asympsd}
\end{eqnarray}
\noindent where  $c\left( d,\theta \right) =\frac{\Gamma \left(\frac{d-\theta}{2}\right)}{\pi^{d/2}2^{\theta }\Gamma
(\theta/2)},$ with $\theta =2\widetilde{\gamma } \nu$ in (\ref{asympsd0})   for $0< 2\widetilde{\gamma } \nu <d,$ and $\theta =2\alpha \beta$  in (\ref{asympsd}) for $0<2\alpha \beta <1.$
\end{remark}

Under \textbf{Assumption A1}, the following asymptotic   trace formula holds as $T\to \infty,$ for each $m\geq 1,$
\begin{eqnarray}
&&
\frac{1}{d_{T}^{m}}\sum_{n=0}^{\infty }\sum_{k\geq 1}\Gamma(n,d)[B_{n}(0,T^{\gamma })B_{n,k}(T)]^{m}\nonumber\\
&&\simeq
\int_{\mathbb{S}_{d}^{m}(1)\times \mathbb{T}^{m}(1)}\prod_{i=1}^{m-1}\frac{\mathcal{L}_{1}(T|t_{i}-t_{i+1}|)\mathcal{L}_{2}(T^{\gamma }\|\mathbf{x}_{i}-\mathbf{x}_{i+1}\|)}{\mathcal{L}_{1}(T)\mathcal{L}_{2}(T^{\gamma })|t_{i}-t_{i+1}|^{\alpha_{T} }\|\mathbf{x}_{i}-\mathbf{x}_{i+1}\|^{\alpha_{S} }}\nonumber\\
&& \hspace*{2cm} \times
\frac{\mathcal{L}_{1}(T|t_{m}-t_{1}|)\mathcal{L}_{2}(T^{\gamma }\|\mathbf{x}_{m}-\mathbf{x}_{1}\|)}{\mathcal{L}_{1}(T)\mathcal{L}_{2}(T^{\gamma })|t_{m}-t_{1}|^{\alpha_{T} }\|\mathbf{x}_{m}-\mathbf{x}_{1}\|^{\alpha_{S} }}
\prod_{i=1}^{m}d\nu(\mathbf{x}_{i})dt_{i}.
\label{tracef}
\end{eqnarray}

\medskip

The next assumption is of technical nature, and will be considered  in the proof of Theorem \ref{pr1} below, allowing the application of  Dominated Convergence Theorem arguments. It does not suppose a big restriction on our covariance function setting, since it holds for a wide family of  slowly varying functions $\mathcal{L}.$

\medskip

\noindent \textbf{Assumption A2}. Assume that the slowly varying functions $\mathcal{L}_{i},$ $i=1,2,$  in \textbf{Assumption A1}, satisfy
\begin{eqnarray}&&\hspace*{-1cm}\int_{\mathbb{S}_{d}^{m}(1)\times \mathbb{T}^{m}(1)}\prod_{i=1}^{m-1}\frac{\mathcal{L}_{1}(T|t_{i}-t_{i+1}|)\mathcal{L}_{2}(T^{\gamma }\|\mathbf{x}_{i}-\mathbf{x}_{i+1}\|)}{\mathcal{L}_{1}(T)\mathcal{L}_{2}(T^{\gamma })|t_{i}-t_{i+1}|^{\alpha_{T} }\|\mathbf{x}_{i}-\mathbf{x}_{i+1}\|^{\alpha_{S} }}\nonumber\\
&& \hspace*{3cm}\times
\frac{\mathcal{L}_{1}(T|t_{m}-t_{1}|)\mathcal{L}_{2}(T^{\gamma }\|\mathbf{x}_{m}-\mathbf{x}_{1}\|)}{\mathcal{L}_{1}(T)\mathcal{L}_{2}(T^{\gamma })|t_{m}-t_{1}|^{\alpha_{T} }\|\mathbf{x}_{m}-\mathbf{x}_{1}\|^{\alpha_{S} }
\prod_{i=1}^{m}d\nu(\mathbf{x}_{i})dt_{i}}
\nonumber\\ &&\hspace*{-1cm}\leq \mathcal{M}\int_{\mathbb{S}_{d}^{m}(1)\times \mathbb{T}^{m}(1)}\prod_{i=1}^{m-1}\frac{ 1}{|t_{i}-t_{i+1}|^{\alpha_{T} }\|\mathbf{x}_{i}-\mathbf{x}_{i+1}\|^{\alpha_{S} }}\frac{1}{|t_{m}-t_{1}|^{\alpha_{T} }\|\mathbf{x}_{m}-\mathbf{x}_{1}\|^{\alpha_{S} }}
\prod_{i=1}^{m}d\nu(\mathbf{x}_{i})dt_{i},\nonumber\end{eqnarray}
\noindent for some positive constant $\mathcal{M}.$

\begin{remark}
From equation (\ref{Taubth}), $\mathcal{L}_{1}(u)=\left[\widetilde{\mathcal{L}}_{1}(u^{2})\right]^{\frac{3}{2}+1}$
  and $\mathcal{L}_{2}(u)=\widetilde{\mathcal{L}}_{2}(u^{2}),$ in Examples 1 and 2, satisfy \textbf{Assumption A2}.
\end{remark}

\subsection{NCLT in the pure point spectral domain}
Let  $A_{T}$ be the  linear functional of the Gaussian subordinated STRF $\mathcal{J}(Z(\mathbf{x},t)),$ with $\mathcal{J}$ having Hermite rank
$m$ defined by:

 \begin{eqnarray}
 A_{T}&=&\frac{1}{d_{T}(m)}\int_{\mathbb{T}(T)}\int_{\mathbb{S}_{d}(T^{\gamma })}\mathcal{J}(Z(\mathbf{x},t))d\nu(\mathbf{x})dt,
 \label{HESTF}
 \end{eqnarray}
 \noindent where  $d_{T}(m)=T^{\gamma (d-(m\alpha_{S}/2))}T^{1-(m\alpha_{T}/2)}\mathcal{L}^{m/2}_{1}(T)\mathcal{L}^{m/2}_{2}(T^{\gamma }),$
   $E[\mathcal{J}^{2}(Z(\mathbf{x},t))]<\infty,$ $\mathbf{x}\in \mathbb{R}^{d+1},$ and $t\in \mathbb{R}.$
 Thus, $\mathcal{J}$ admits the following  series expansion  in  $\mathcal{L}_{2}(\Omega,\mathcal{A},P),$ with respect to  the orthogonal Hermite basis $\left\{ H_{q},\ q\geq 0\right\}:$
 \begin{equation}\mathcal{J}(z)=\mathcal{J}_{0}+\sum_{q=m}^{\infty }\frac{\mathcal{J}_{q}}{q!}H_{q}(z),\ z\in \mathbb{R},\  \mathcal{J}_{0}=E[\mathcal{J}],\ \mathcal{J}_{q}=\int_{\mathbb{R}}H_{q}(z)\mathcal{J}(z)\phi(z)dz,\quad  q\geq m,\label{Hefind}\end{equation}
\noindent where, for $q\geq 0,$  $H_{q}$  is the   the Hermite polynomial   of order $q$ satisfying
\begin{equation}
\frac{d^{n}\phi}{dz^{n}}(z)=(-1)^{n}H_{n}(z)\phi(z).
\label{hq}
\end{equation}
\noindent Hence,
\begin{eqnarray}
H_{0}(z)&=& 1,\ H_{1}(z)=z,\ H_{2}(z)=z^{2}-1\nonumber\\
H_{3}(z)&=& z^{3}-3z,\ H_{4}(z)=z^{4}-6z^{2}+3,\dots ,\label{exherpol}
\end{eqnarray}
\noindent and
\begin{eqnarray}
 A_{T}&=& \frac{1}{d_{T}(m)}\left[2T^{1+d\gamma }|\mathbb{S}_{d}(1)|\mathcal{J}_{0}+\sum_{n=m}^{\infty}\frac{\mathcal{J}_{n}}{n!}
\int_{\mathbb{T}(T)}\int_{\mathbb{S}_{d}(T^{\gamma })}H_{n}(Z(\mathbf{x},t))d\nu(\mathbf{x})dt\right].\nonumber\end{eqnarray}

   In the next proposition, Theorem 1 in \cite{LeonenkoRuizMedina25} is reformulated in our setting.
\begin{proposition}
\label{th3bb} Under \textbf{Assumptions A0-A1},  considering $\mathcal{J}(z)$  has Hermite rank $m,$  the random
variables
\begin{eqnarray}
Y_{T} &=&\frac{\widetilde{A}_{T}-\mathbb{E}[\widetilde{A}_{T}] }{|\mathcal{J}_{m}|\sigma _{m,K}(T)/m!}  \label{funct1} \\
&&  \notag \\
&&\hspace*{-3.75cm}\mbox{and}  \notag \\
&&  \notag \\
&&Y_{m,T}=\frac{\mbox{sgn}(\mathcal{J}_{m})\int_{\mathbb{T}(T)}\int_{\mathbb{S}_{d}(T^{\gamma })}H_{m}(Z(x,t))dxdt}{\sigma _{m,K}(T)}  \label{funct2}
\end{eqnarray}
\noindent have the same limiting distributions as $T\rightarrow \infty $ (if
one of it exists). Here, \begin{eqnarray}\sigma _{m,K}(T)&=&\mbox{Var}\left(\int_{\mathbb{T}(T)}\int_{\mathbb{S}_{d}(T^{\gamma })}H_{m}(Z(\mathbf{x},t))d\nu(\mathbf{x})dt\right)\nonumber\\
\widetilde{A}_{T} &=&d_{T}(m)A_{T},\quad d_{T}(m)=T^{\gamma (d-(m\alpha_{S}/2))}T^{1-(m\alpha_{T}/2)}\mathcal{L}^{m/2}_{1}(T)\mathcal{L}^{m/2}_{2}(T^{\gamma }), \nonumber\end{eqnarray}
\noindent where $A_{T}$ has been introduced in equation (\ref{HESTF}), and $\sigma _{m,K}(T)=d_{T}(m)(1+o(1)),$ as $T\to \infty.$
\end{proposition}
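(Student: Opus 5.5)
The plan is the classical reduction-theorem argument, made spatiotemporal: use Hermite orthogonality to show that everything beyond the $m$-th chaos is negligible relative to $\sigma_{m,K}(T)$. Using the expansion (\ref{Hefind}), write
$$\widetilde{A}_{T}-\mathbb{E}[\widetilde{A}_{T}]=\frac{\mathcal{J}_{m}}{m!}\int_{\mathbb{T}(T)}\int_{\mathbb{S}_{d}(T^{\gamma })}H_{m}(Z(\mathbf{x},t))d\nu(\mathbf{x})dt+R_{T},$$
where
$$R_{T}=\sum_{q\geq m+1}\frac{\mathcal{J}_{q}}{q!}\int_{\mathbb{T}(T)}\int_{\mathbb{S}_{d}(T^{\gamma })}H_{q}(Z(\mathbf{x},t))d\nu(\mathbf{x})dt.$$
Dividing by $|\mathcal{J}_{m}|\sigma_{m,K}(T)/m!$ gives $Y_{T}=Y_{m,T}+m!R_{T}/(|\mathcal{J}_{m}|\sigma_{m,K}(T))$. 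It therefore suffices to prove
$$\mathbb{E}[R_{T}^{2}]/\sigma_{m,K}^{2}(T)\to 0,$$
since $L^{2}$-convergence of the difference to zero implies, by Slutsky's lemma, that the two limiting distributions coincide.

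Note that $\sigma_{m,K}(T)$ is a standard deviation, so it is compared with $d_{T}(m)$, not $d_{T}^{2}(m)$. By the diagram formula, $\mathbb{E}[H_{q}(Z(\mathbf{x},t))H_{p}(Z(\mathbf{y},s))]=\delta_{qp}\,q!\,C_{Z}^{q}(\|\mathbf{x}-\mathbf{y}\|,|t-s|)$. Hence
$$\sigma_{m,K}^{2}(T)=m!\int\!\!\int C_{Z}^{m}\,d\nu\,dt\,d\nu\,ds.$$
After the change of variables $\mathbf{x}=T^{\gamma}\mathbf{u}$, $t=T\tau$ on $\mathbb{S}_{d}(1)\times\mathbb{T}(1)$ (Jacobian $T^{2(\gamma d+1)}$), \textbf{Assumption A1} together with the slow variation (\ref{eq2b}) gives
$$\sigma_{m,K}^{2}(T)\sim m!\,T^{2\gamma d+2}\,T^{-m\gamma\alpha_{S}}T^{-m\alpha_{T}}\mathcal{L}_{1}^{m}(T)\mathcal{L}_{2}^{m}(T^{\gamma})\,c_{m},$$
with
$$c_{m}=\int\frac{d\nu(\mathbf{u})d\nu(\mathbf{v})d\tau ds}{|\tau-s|^{m\alpha_{T}}\|\mathbf{u}-\mathbf{v}\|^{m\alpha_{S}}}.$$
The constant $c_{m}$ is finite for $m\alpha_{T}<1$ and $m\alpha_{S}<d$, and in particular in the Hermite rank two case the paper targets, by the ranges in (\ref{cstas}). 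Passing to the limit inside the integral needs a Potter-type bound on $\mathcal{L}_{1},\mathcal{L}_{2}$, in the spirit of \textbf{Assumption A2}. This yields $\sigma_{m,K}(T)=d_{T}(m)(1+o(1))$, up to the normalizing constant, which is absorbed into $d_{T}(m)$.

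For the remainder, orthogonality of the chaoses gives
$$\mathbb{E}[R_{T}^{2}]=\sum_{q\geq m+1}\frac{\mathcal{J}_{q}^{2}}{q!}\int\!\!\int C_{Z}^{q}.$$
Since $|C_{Z}|\leq 1$, we have $|C_{Z}|^{q}\leq |C_{Z}|^{m+1}$, so $\mathbb{E}[R_{T}^{2}]\leq \mathbb{E}[\mathcal{J}^{2}(Z)]\int\!\!\int|C_{Z}|^{m+1}$, using $\sum_q \mathcal{J}_{q}^{2}/q!<\infty$.

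The main obstacle is bounding $\int\!\!\int|C_{Z}|^{m+1}$ over the growing domain. This is delicate because if $(m+1)\alpha_{T}\geq1$ or $(m+1)\alpha_{S}\geq d$ the rescaled kernel is no longer integrable, so the limit step for $\sigma_{m,K}$ cannot simply be repeated. I would split the domain into the region where $T|\tau-s|\geq T^{\varepsilon}$ and $T^{\gamma}\|\mathbf{u}-\mathbf{v}\|\geq T^{\gamma\varepsilon}$, and its complement. On the first region, A1 gives $|C_{Z}|^{m+1}\leq |C_{Z}|^{m}\cdot o(1)$ uniformly, so this part is $o(\sigma_{m,K}^{2}(T))$. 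On the complement (a near-diagonal band in time or space), I would use $|C_{Z}|\leq1$ and bound its measure, which is $O(T^{2\gamma d+2}(T^{\varepsilon-1}+T^{\gamma(\varepsilon-1)d}))$. Choosing $\varepsilon$ small, and using $m\alpha_{T}<1$ and $m\alpha_{S}<d$, this is also $o(T^{2\gamma d+2-m\gamma\alpha_{S}-m\alpha_{T}})$ once slowly varying factors are absorbed. Strictly, a mixed region (far in time, close in space, or vice versa) also arises; there I would combine the factorized decay in one variable with the small measure in the other, which is where care is needed, particularly when $\gamma=0$. Together these give the claim and hence the proposition.
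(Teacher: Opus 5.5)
\textbf{Verdict: genuine gap.} Your architecture is sound: split off the $m$-th chaos, bound $E[R_T^2]\le E[\mathcal{J}^2(Z)]\int\!\!\int|C_Z|^{m+1}$, and conclude by Slutsky. This is the standard reduction argument, which the paper itself does not write out; it only invokes Theorem 1 of \cite{LeonenkoRuizMedina25}. Your side remarks are also correct: $\sigma_{m,K}(T)$ must be read as a standard deviation, $\sigma_{m,K}(T)=\sqrt{m!\,c_m}\,d_T(m)(1+o(1))$ with your constant $c_m$, and this needs $m\alpha_T<1$, $m\alpha_S<d$, which \textbf{A1} guarantees only for $m=2$.

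\textbf{Where the proof fails.} The gap is in the one estimate that carries the proof, $\int\!\!\int|C_Z|^{m+1}=o(\sigma_{m,K}^2(T))$. The complement of your region ``far in time \emph{and} far in space'' is ``near in time \emph{or} near in space'', and bounding $|C_Z|$ by $1$ there loses too much.
\begin{itemize}
\item The time band $\{|t-s|<T^{\varepsilon}\}$ has measure of order $T^{2\gamma d+1+\varepsilon}$. This is $o(T^{2\gamma d+2-m\gamma\alpha_S-m\alpha_T})$ only if $m(\alpha_T+\gamma\alpha_S)<1-\varepsilon$.
\item The space band needs $m(\alpha_T+\gamma\alpha_S)<\gamma d(1-\varepsilon)$.
\end{itemize}
Neither condition follows from $m\alpha_T<1$, $m\alpha_S<d$. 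With $d=2$, $m=2$, $\gamma=1$, $\alpha_S=0.9$, $\alpha_T=0.4$ both fail. For $\gamma=0$ the space band is a fixed fraction of the whole domain, so it has measure of order $T^2$ against a target $T^{2-m\alpha_T}$. So your claim that the complement is $o(\cdot)$ is false in much of the parameter range. The ``mixed region'' you postpone is not a boundary case but the bulk of the complement. Moreover, \textbf{A1} is a joint-scaling statement and does not give decay in one variable uniformly in the other, which is what you would need there.

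\textbf{How to repair it.} Make the exceptional set ``near in both'', with a fixed radius. The paper works with a spectral density $f_Z$, which is integrable since its integral is, up to normalization, $C_Z$ at the origin, i.e.\ $1$. The Riemann--Lebesgue lemma then gives $C_Z(\mathbf{h},\tau)\to 0$ as $\|\mathbf{h}\|+|\tau|\to\infty$. Given $\delta>0$, pick $r_0$ with $|C_Z|\le\delta$ off $D_{r_0}=\{\|\mathbf{x}-\mathbf{y}\|\le r_0,\ |t-s|\le r_0\}$. Then
\[
\int\!\!\int|C_Z|^{m+1}\le\delta\int\!\!\int|C_Z|^{m}+\mathrm{meas}(D_{r_0}).
\]
The two terms are controlled as follows.
\begin{itemize}
\item Caps of chordal radius $r_0$ on $\mathbb{S}_d(T^{\gamma})$ have area $O(r_0^{d})$ uniformly in $T$, so $\mathrm{meas}(D_{r_0})=O(T^{\gamma d+1})$. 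This is $o(T^{2(\gamma d+1)-m\gamma\alpha_S-m\alpha_T})$ exactly because $\gamma(d-m\alpha_S)+(1-m\alpha_T)>0$; the slowly varying factors are absorbed by the power margin.
\item $\int\!\!\int|C_Z|^{m}=O(\sigma_{m,K}^2(T))$ by the same dominated-convergence argument, with the same Potter/\textbf{A2}-type bound, that you already use for $\sigma_{m,K}$.
\end{itemize}
Letting $\delta\downarrow 0$ gives $E[R_T^2]=o(\sigma_{m,K}^2(T))$ for every $\gamma\ge 0$, including $\gamma=0$, with no mixed-region analysis needed.
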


The proof is straightforward since, under  \textbf{Assumptions A0-A1}, \textbf{Conditions 1,2,3 and 4} in  Theorem 1 in \cite{LeonenkoRuizMedina25} hold.

\medskip

Applying Proposition \ref{th3bb}, the next result characterizes the asymptotic behavior of $A_{T}$ for Hermite rank $m=2.$ Thus,
    $S_{T}$ in (\ref{eq2}), and  $A_{T}$ in (\ref{HESTF})   display the same asymptotic behavior, under $m=2$.

\begin{theorem}
\label{pr1} Under \textbf{Assumptions A0--A2},  the following weak convergence holds:
$$S_{T}\to_{D}S_{\infty},\quad T\to \infty,$$
\noindent where
$$S_{T}=\frac{1}{d_{T}}\int_{\mathbb{T}(T)}\int_{\mathbb{S}_{d}(T^{\gamma })}(Z^{2}(\mathbf{x},t)-1)d\nu(\mathbf{x})dt,\quad T\geq 1,$$
\noindent with $d_{T}=T^{\gamma (d-\alpha_{S})}T^{1-\alpha_{T}}\mathcal{L}_{1}(T)\mathcal{L}_{2}(T^{\gamma }),$ and the limit random variable $S_{\infty}$ has characteristic function given by
\begin{equation}
\psi (\xi )=E\left[\exp\left(i\xi  S_{\infty}\right)\right]=\exp \left( \frac{1}{2}\sum_{m=2}^{\infty }\frac{\left( 2\mathrm{i}%
\xi \right) ^{m}}{m}c_{m}\right),\quad \xi \in \mathbb{R}. \label{chf}
\end{equation}%
\noindent Here, for $m\geq 2,$
\begin{eqnarray}
c_{m} &=&\int_{\mathbb{S}_{d}^{m}(1)\times \mathbb{T}^{m}(1)}\prod_{i=1}^{m-1}\frac{ 1}{|t_{i}-t_{i+1}|^{\alpha_{T} }\|\mathbf{x}_{i}-\mathbf{x}_{i+1}\|^{\alpha_{S} }}\frac{1}{|t_{m}-t_{1}|^{\alpha_{T} }\|\mathbf{x}_{m}-\mathbf{x}_{1}\|^{\alpha_{S} }}
\nonumber\\
&&\hspace*{8cm}\times
\prod_{i=1}^{m}d\nu(\mathbf{x}_{i})dt_{i}.
\label{eqcoefchfbcc}
\end{eqnarray}
\end{theorem}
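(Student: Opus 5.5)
The strategy is to compute the characteristic function of $S_T$ exactly from the Karhunen--Lo\'eve representation (\ref{KLexp}), and then pass to the limit cumulant by cumulant. By (\ref{Ysquare}),
\[
\int_{\mathbb{T}(T)}\int_{\mathbb{S}_{d}(T^{\gamma})}(Z^{2}(\mathbf{x},t)-1)\,d\nu(\mathbf{x})\,dt=\sum_{n,j,k}B_{n}(0,T^{\gamma})B_{n,k}(T)\,(\eta_{n,j,k}^{2}-1).
\]
This holds because the centering constant $2T^{1+d\gamma}|\mathbb{S}_{d}(1)|$ coincides with the trace $\sum_{n}\Gamma(n,d)B_{n}(0,T^{\gamma})\sum_{k}B_{n,k}(T)$, since $C_{Z}(0,0)=1$.

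\textbf{Step 1: characteristic function of $S_T$.} Proceed as in (\ref{tvarcf}), but with the variable $\xi/d_{T}$. The subtraction of the mean cancels exactly the $m=1$ term of the series, which gives
\[
\varphi_{S_{T}}(\xi)=\exp\left(\frac12\sum_{m=2}^{\infty}\frac{(2\mathrm{i}\xi)^{m}}{m}\,\frac{1}{d_{T}^{m}}\sum_{n=0}^{\infty}\Gamma(n,d)[B_{n}(0,T^{\gamma})]^{m}\sum_{k\geq1}[B_{n,k}(T)]^{m}\right).
\]
Initially this is valid for $|\xi|$ small. Equivalently, $\log\varphi_{S_T}(\xi)=\sum_{n,k}\Gamma(n,d)\bigl[-\tfrac12\log(1-2\mathrm{i}\xi\lambda_{n,k}(T))-\mathrm{i}\xi\lambda_{n,k}(T)\bigr]$ with $\lambda_{n,k}(T)=B_{n}(0,T^{\gamma})B_{n,k}(T)/d_{T}$. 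This product form is well defined for every real $\xi$ as soon as $\sum\Gamma(n,d)\lambda_{n,k}^{2}<\infty$.

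\textbf{Step 2: convergence of each cumulant.} The $m$-th normalized cumulant
\[
\kappa_{m}(T)=d_{T}^{-m}\sum_{n}\Gamma(n,d)\sum_{k}[B_{n}(0,T^{\gamma})B_{n,k}(T)]^{m}
\]
is the trace of the $m$-th power of the restricted covariance operator. I would write it as an $m$-fold cyclic integral of $C_{Z}$ over $(\mathbb{S}_{d}(T^{\gamma})\times\mathbb{T}(T))^{m}$. The change of variables $\mathbf{x}_{i}\mapsto T^{\gamma}\mathbf{x}_{i}$, $t_{i}\mapsto Tt_{i}$ produces the Jacobian $T^{m(\gamma d+1)}$. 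Dividing by $d_{T}^{m}$ and using Assumption A1 yields the asymptotic trace formula (\ref{tracef}). Pointwise off the diagonals, the slowly varying ratios tend to one by (\ref{eq2b}). Assumption A2 supplies the integrable majorant $\mathcal{M}\cdot(\text{integrand of }c_{m})$, which is finite because $\alpha_{T}<1/2$ and $\alpha_{S}<d/2$ make each cyclic kernel a trace class operator, so its $m$-th power has finite trace. Dominated convergence then gives $\kappa_{m}(T)\to c_{m}$ for every $m\geq2$.

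\textbf{Step 3: from cumulants to weak convergence.} Let $\mathcal{K}$ denote the operator on $L^{2}(\mathbb{S}_{d}(1)\times\mathbb{T}(1))$ with kernel $|t-s|^{-\alpha_{T}}\|\mathbf{x}-\mathbf{y}\|^{-\alpha_{S}}$. Then $c_{m}=\mathrm{Tr}(\mathcal{K}^{m})\leq\|\mathcal{K}\|^{m-2}c_{2}$, so the limit series (\ref{chf}) converges for $|\xi|<1/(2\|\mathcal{K}\|)$. Moreover, (\ref{chf}) is the regularized Fredholm determinant $\det_{2}(I-2\mathrm{i}\xi\mathcal{K})^{-1/2}$, which is the characteristic function of the random variable $\sum_{l}\mu_{l}(\eta_{l}^{2}-1)$, where $\mu_{l}$ are the eigenvalues of $\mathcal{K}$. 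This random variable has exponential moments, so its law is determined by its cumulants. Convergence of all cumulants of $S_{T}$ to those of $S_{\infty}$ therefore implies convergence of all moments, and the method of moments gives $S_{T}\to_{D}S_{\infty}$. As a more direct alternative, I would use the uniform bounds $\kappa_{m}(T)\leq\kappa_{2}(T)\sup_{n,k}\lambda_{n,k}(T)^{m-2}$ to obtain pointwise convergence of $\varphi_{S_T}$ on a neighbourhood of zero, and extend to all $\xi\in\mathbb{R}$ by analyticity and Lévy's continuity theorem.

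\textbf{Main obstacle.} I expect the hardest part to be the justification of Step 2 uniformly in $m$. Specifically, one needs $\sup_{T}\|\cdot\|_{\mathrm{op}}$ of the normalized restricted covariance operators to be bounded, so that the series in $m$ can be exchanged with the limit $T\to\infty$ on a common disc. My plan is to bound the operator norm by a Schur test using the majorant from Assumption A2, which gives $\lambda_{n,k}(T)\leq\mathcal{M}'\|\mathcal{K}\|$ uniformly in $T$. I also need to check carefully that the constant $\mathcal{M}$ of Assumption A2 can be taken independent of $m$ (or grows at most geometrically in $m$), since otherwise the exchange of limits fails.
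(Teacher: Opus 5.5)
Your Steps 1--2 follow the paper's own argument. The paper also writes $\psi_{S_T}$ from (\ref{KLexp})--(\ref{tvarcf}) with the $m=1$ term cancelled, and obtains $c_m(T)\to c_m$ from (\ref{tracef}) together with Assumption A2. You differ in how you pass to weak convergence, and your route is the more complete one.

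The paper uses A2 in the form $c_m(T)\le\mathcal{M}c_m$, with $\mathcal{M}$ tacitly independent of $m$. It then exchanges $\lim_T$ and $\sum_m$ by dominated convergence for $|\xi|$ below a fixed threshold, and stops at convergence of characteristic functions on a neighbourhood of the origin, citing the continuous mapping theorem. That alone does not give weak convergence without a further argument, such as tightness plus determinacy of the limit.

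Your main route needs only $\kappa_m(T)\to c_m$ for each fixed $m$. Convergence of all cumulants gives convergence of all moments. The limit $\sum_l\mu_l(\eta_l^2-1)$ is correctly identified through $\det_2$, which also gives meaning to (\ref{chf}) for every real $\xi$. It has a moment generating function near $0$ because $c_m\le\|\mathcal{K}\|^{m-2}c_2$, so it is moment-determinate, and the method of moments concludes. You therefore need neither $m$-uniformity of the constant in A2 nor any extension from a neighbourhood of $0$.

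In particular, the ``main obstacle'' you flag does not arise on your main route. On your alternative route it is automatic: $\sup_{n,k}\lambda_{n,k}(T)\le\kappa_2(T)^{1/2}$ gives $\kappa_m(T)\le\kappa_2(T)^{m/2}$, and $\kappa_2(T)\to c_2$ keeps this bounded, so no Schur test is needed.

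Two small corrections. First, the operator with kernel $|t-s|^{-\alpha_T}\|\mathbf{x}-\mathbf{y}\|^{-\alpha_S}$ is Hilbert--Schmidt, which is exactly what $\alpha_T<1/2$ and $\alpha_S<d/2$ give, but it is not trace class. This is why $c_m=\mathrm{Tr}(\mathcal{K}^m)$ is finite only for $m\ge 2$. Second, A2 as printed bounds integrals, not integrands. Running dominated convergence in Step 2 therefore really requires a pointwise majorant, e.g.\ from Potter's bounds for $\mathcal{L}_1,\mathcal{L}_2$; the paper glosses over this in exactly the same way.
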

\begin{proof}
Note that, applying   asymptotic spectral properties of Riesz kernel  restricted to a compact set (see, e.g.,  \cite{Dostanić98}  and \cite{Widom63}),
for $0<\alpha_{T}<1/2,$ and  $0<\alpha_{S}<d/2,$ the integral operator
  $\mathcal{K}^{2}_{\alpha_{S},\alpha_{T}}=\mathcal{K}_{\alpha_{S},\alpha_{T}}\star\mathcal{K}_{\alpha_{S},\alpha_{T}}$ on $L^{2}(\mathbb{S}_{d}(1)\times \mathbb{T}(1), d\nu\otimes dt),$ with kernel
$k^{\star(2)}_{\alpha_{S},\alpha_{T}},$ given by
\begin{equation}k^{\star (2)}_{\alpha_{S},\alpha_{T}}(\mathbf{x},\mathbf{y},t,s)=\frac{1}{\|\mathbf{x}-\mathbf{y}\|^{2\alpha_{S}}|t-s|^{2\alpha_{T}}},\quad \mathbf{x},\mathbf{y}\in \mathbb{S}_{d}(1),\ t,s\in \mathbb{T}(1),\label{keyop}
\end{equation}
\noindent has finite  trace norm $\left\|\mathcal{K}^{2}_{\alpha_{S},\alpha_{T}}\right\|_{L^{1}(L^{2}(\mathbb{S}_{d}(1)\times \mathbb{T}(1), d\nu \otimes dt))}.$ Here,
 $\star $ denotes convolution, and
 \begin{eqnarray}&&
 \hspace*{-0.5cm}\left\|\mathcal{K}^{2}_{\alpha_{S},\alpha_{T}}\right\|_{L^{1}(L^{2}(\mathbb{S}_{d}(1)\times \mathbb{T}(1), d\nu \otimes dt))}=
 \int_{[\mathbb{S}_{d}(1)\times \mathbb{T}(1)]^{2}}\frac{1}{\|\mathbf{x}-\mathbf{y}\|^{2\alpha_{S}}|t-s|^{2\alpha_{T}}}dsdtd\nu(\mathbf{x})d\nu(\mathbf{y}).\nonumber\\
 \label{trace2form}
 \end{eqnarray}

 \noindent
The proof of this result then  follows from  the application of Dominated Convergence and Continuous Mapping Theorems. Specifically,
let us first compute, for each $\xi\in \mathbb{R},$  the characteristic function $\psi_{S_{T}} (\xi )$ of $S_{T},$  $T\geq 1.$
From equations (\ref{KLexp}), (\ref{Ysquare}) and (\ref{tvarcf}),

\begin{eqnarray}
&&\psi_{S_{T}} (\xi )=\left[\exp\left(i\xi S_{T}\right)\right]=\exp\left(-i\xi \frac{\sum_{n=0}^{\infty }\Gamma(n,d)B_{n}(0,T^{\gamma })\sum_{k\geq 1}B_{n,k}(T)}{d_{T}}\right)\nonumber\\ &&\hspace*{0.5cm}\times \exp \left( \frac{1}{2}\sum_{m=1}^{\infty }\frac{((2\mathrm{i}\xi)/d_{T} )^{m}}{m}
\sum_{n=0}^{\infty }\Gamma(n,d)[B_{n}(0,T^{\gamma })]^{m}\sum_{k\geq 1}[B_{n,k}(T)]^{m}\right)\nonumber\\
&&=\exp\left(-i\xi c_{1}(T)\right)\exp \left( \frac{1}{2}\sum_{m=1}^{\infty }\frac{(2\mathrm{i}\xi)^{m}}{m}c_{m}(T)\right)\nonumber\\
&&=\exp \left( \frac{1}{2}\sum_{m=2}^{\infty }\frac{(2\mathrm{i}\xi)^{m}}{m}c_{m}(T)\right),\label{chtst}\end{eqnarray}
\noindent with $$c_{m}(T)=\frac{1}{d^{m}_{T}}\sum_{n=0}^{\infty }\Gamma(n,d)[B_{n}(0,T^{\gamma })]^{m}\sum_{k\geq 1}[B_{n,k}(T)]^{m},\ T\geq 1,\ m\geq 1.
$$

Under \textbf{Assumptions A0--A2}, from equation (\ref{tracef}), for every $m\geq 2,$ considering  $T$ sufficiently large,

\begin{eqnarray}
c_{m}(T)&\leq &\mathcal{M} \int_{\mathbb{S}_{d}^{m}(1)\times \mathbb{T}^{m}(1)}\prod_{i=1}^{m-1}\frac{ 1}{|t_{i}-t_{i+1}|^{\alpha_{T} }\|\mathbf{x}_{i}-\mathbf{x}_{i+1}\|^{\alpha_{S} }}\frac{1}{|t_{m}-t_{1}|^{\alpha_{T} }\|\mathbf{x}_{m}-\mathbf{x}_{1}\|^{\alpha_{S} }}
\nonumber\\
&&\hspace*{9.5cm}\times \prod_{i=1}^{m}d\nu(\mathbf{x}_{i})dt_{i}\nonumber\\
&=&\mathcal{M}\left\|\mathcal{K}^{m}_{\alpha_{S},\alpha_{T}}\right\|_{L^{1}(L^{2}(\mathbb{S}_{d}(1)\times \mathbb{T}(1), d\nu\otimes dt))}.
\label{eqcmdcth}\end{eqnarray}

As pointed out before, $\left\|\mathcal{K}^{2}_{\alpha_{S},\alpha_{T}}\right\|_{L^{1}(L^{2}(\mathbb{S}_{d}(1)\times \mathbb{T}(1), d\nu \otimes dt))}<\infty,$ which means that the logarithm of the Fredholm determinant of $\mathcal{K}^{2}_{\alpha_{S},\alpha_{T}}$  is finite on a neighborhood of  zero. That is, for $\xi$ such that $|\xi |<\frac{1}{2\left\|\mathcal{K}^{2}_{\alpha_{S},\alpha_{T}}\right\|_{L^{1}(L^{2}(\mathbb{S}_{d}(1)\times \mathbb{T}(1), d\nu \otimes dt))}},$
we have
\begin{equation}
\sum_{m=2}^{\infty }\frac{(2\mathrm{i}\xi)^{m}}{m}c_{m}=\sum_{m=2}^{\infty }\frac{(2\mathrm{i}\xi)^{m}}{m}\left\|\mathcal{K}^{m}_{\alpha_{S},\alpha_{T}}\right\|_{L^{1}(L^{2}(\mathbb{S}_{d}(1)\times \mathbb{T}(1), d\nu \otimes dt))}
<\infty.\label{lfd}
\end{equation}

\noindent Therefore, from equations (\ref{eqcmdcth})-(\ref{lfd}), one can apply Dominated Convergence Theorem  leading  to  \begin{eqnarray}&&\lim_{T\to \infty}\sum_{m=2}^{\infty }\frac{(2\mathrm{i}\xi)^{m}}{m}c_{m}(T)=
\sum_{m=2}^{\infty }\frac{(2\mathrm{i}\xi)^{m}}{m}\lim_{T\to \infty}c_{m}(T)=\sum_{m=2}^{\infty }\frac{(2\mathrm{i}\xi)^{m}}{m}c_{m},\label{limidcth}
\end{eqnarray}
\noindent for every $\xi $ such that   $|\xi |<\frac{1}{2\left\|\mathcal{K}^{2}_{\alpha_{S},\alpha_{T}}\right\|_{L^{1}(L^{2}(\mathbb{S}_{d}(1)\times \mathbb{T}(1), d\nu \otimes dt))}}.$
Finally,  for $\xi $ such that    $|\xi |<\frac{1}{2\left\|\mathcal{K}^{2}_{\alpha_{S},\alpha_{T}}\right\|_{L^{1}(L^{2}(\mathbb{S}_{d}(1)\times \mathbb{T}(1), d\nu \otimes dt))}},$ applying Continuous Mapping Theorem,   we obtain  $$\lim_{T\to \infty}\psi_{S_{T}} (\xi )=\psi (\xi ),$$ \noindent as we wanted to prove.
\end{proof}

\section{Mean--square convergence   for STRFs on compact sets  of positive Lebesgue measure}
\label{ncltspectr}

In this section we work in  the second--order continuous spectral domain  of the unrestricted STRF $Z.$  Conditions are then  formulated in terms of  the local  behavior  at zero frequency of its spectral density $f_{Z}.$  Indeed, the results derived here hold  for any nontrivial compact convex regular domain $\mathcal{K}$ of $\mathbb{R}^{d+1},$ containing the null vector $\mathbf{0}\in \mathbb{R}^{d+1}.$
By this reason, in the subsequent development we will adopt the notation $\mathcal{K}$ to refer to the spatial domain of $\mathbb{R}^{d+1}$ where our STRF family is restricted.
The case of $\mathbb{S}_{d}(1)$ deserves a separate analysis, and  will be  addressed   in Section \ref{discussion}.

The following  assumptions are required regarding the asymptotic  local behavior of $f_{Z}$ at spatiotemporal  zero frequency.

\begin{itemize}
\item[\textbf{B1}] The spectral density  $f_{Z}$   is  such that, for  each $\boldsymbol{\lambda }\in \mathbb{R}^{d+1},$ and  $\omega \in \mathbb{R},$
\begin{eqnarray}&&\lim_{T\to \infty}\frac{f_{Z}(\|T^{-\gamma }\boldsymbol{\lambda }\|,T^{-1}\omega )}{T^{\gamma (d+1-\alpha_{S})+(1-\alpha_{T})}\mathcal{L}_{S}(T^{\gamma })\mathcal{L}_{T}(T)}=
\|\boldsymbol{\lambda}\|^{-(d+1-\alpha_{S})}|\omega|^{-(1-\alpha_{T})},
\nonumber\end{eqnarray}
 \noindent where  $\gamma \geq 0,$ $\alpha_{S}\in (0, (d+1)/2),$ and $\alpha _{T}\in (0,1/2).$

 \end{itemize}

\begin{itemize}

\item[\textbf{B2}] The function $f_{Z}$ is a decreasing function, with respect to the Euclidean distance to zero frequency at a neighborhood of  this frequency.
 \end{itemize}
 \begin{remark}
Note that the decreasing monotone behavior  of  $f_{Z}$ in a neighbourhood of zero frequency in \textbf{Assumption B2},  with $f_{Z}$ being continuous except for $(\omega , \boldsymbol{\lambda })=(0,\mathbf{0}),$ implies that, if the covariance function of $Z$ satisfies   an \textbf{Assumption A1}-type condition, then,   by Tauberian Theorem 4 in \cite{LeonenkoOlenko13}, \textbf{B1}  holds (see also \cite{Olenko05}; \cite{LeonenkoOlenko14}).
 \end{remark}

The next result establishes the convergence in the mean--square sense  of  the functional
 $$S_{T}=\frac{1}{\mathcal{L}_{S}(T^{\gamma })T^{\gamma (d+1-\alpha_{S})}\mathcal{L}_{T}(T)T^{1-\alpha_{T}}}\int_{\mathbb{T}(T)\times T^{\gamma }\mathcal{K}} \left(Z^{2}(\mathbf{x},t)-1\right)d\nu_{\mathcal{K}}(\mathbf{x})dt,$$
\noindent to a random variable $S_{\infty}$ admitting a  double Wiener-It{\^o} stochastic integral representation. Here,
$d\nu_{\mathcal{K}}$ denotes the Lebesgue measure in $\mathbb{R}^{d+1}.$
 \begin{theorem}
 \label{th3}
 Under \textbf{B1}--\textbf{B2}, as $T\to \infty,$  $$E[S_{T}-S_{\infty}]^{2}\to 0,$$   \noindent where
 \begin{eqnarray}&&S_{\infty}=2|\mathcal{K}|c(\alpha_{S},\alpha_{T},d+1)\int_{(\mathbb{R}^{d+1}\times \mathbb{R})\times (\mathbb{R}^{d+1}\times \mathbb{R})}^{\prime \prime }\widehat{1}_{\mathbb{T}(1)}(\omega_{1}+\omega_{2})\widehat{1}_{\mathcal{K}}(\boldsymbol{\lambda}_{1}+\boldsymbol{\lambda}_{2})\nonumber\\ &&\hspace*{5cm}\times \prod_{j=1}^{2}\frac{1}{\|\boldsymbol{\lambda_{j} }\|^{(d+1-\alpha_{S})/2}}\frac{1}{|\omega_{j} |^{(1-\alpha_{T})/2}}
W(d\boldsymbol{\lambda}_{j},d\omega_{j}),\nonumber\end{eqnarray}
 \noindent where     $|\mathcal{K}|=\int_{\mathcal{K}}d\nu_{\mathcal{K}}(\mathbf{x}),$ and with    $\int_{(\mathbb{R}^{d+1}\times \mathbb{R})\times (\mathbb{R}^{d+1}\times \mathbb{R})}^{\prime \prime }$  meaning that one does not
integrate
on the hyperdiagonals. Here, $W$ denotes a complex spatiotemporal Wiener measure on $\mathbb{R}^{d+1}\times \mathbb{R},$  i.e.,  a complex white noise Gaussian measure with Lebesgue control measure on $\mathbb{R}^{d+1}\times \mathbb{R}.$  Functions $\widehat{1}_{\mathcal{K}}$ and $\widehat{1}_{\mathbb{T}(1)}$ respectively denote the  characteristic functions  of the uniform probability distributions of the sets $\mathcal{K}$ and $\mathbb{T}(1).$ The constant $$c(\alpha_{S},\alpha_{T},d+1)=\frac{\Gamma \left(\frac{d+1-\alpha_{S}}{2}\right)}{\pi^{(d+1)/2}2^{\alpha_{S}}\Gamma (\alpha_{S}/2)}  \frac{\Gamma \left(\frac{1-\alpha_{T}}{2}\right)}{\pi^{1/2}2^{\alpha_{T}}\Gamma (\alpha_{T}/2)}.$$
 \end{theorem}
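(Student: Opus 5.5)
We follow the classical Dobrushin--Major/Taqqu route, transplanted to the spatiotemporal setting. First we write $Z$ through its spectral representation,
$$Z(\mathbf{x},t)=\int_{\mathbb{R}^{d+1}\times\mathbb{R}}e^{i\langle\boldsymbol{\lambda},\mathbf{x}\rangle+i\omega t}f_{Z}^{1/2}(\|\boldsymbol{\lambda}\|,\omega)\,W(d\boldsymbol{\lambda},d\omega).$$
The It\^o formula for Hermite polynomials then gives
$$H_{2}(Z(\mathbf{x},t))=\int''e^{i\langle\boldsymbol{\lambda}_{1}+\boldsymbol{\lambda}_{2},\mathbf{x}\rangle+i(\omega_{1}+\omega_{2})t}\prod_{j=1}^{2}f_{Z}^{1/2}(\|\boldsymbol{\lambda}_{j}\|,\omega_{j})\,W(d\boldsymbol{\lambda}_{j},d\omega_{j}).$$
Integrating over $\mathbb{T}(T)\times T^{\gamma}\mathcal{K}$ and using stochastic Fubini, we obtain a double Wiener--It\^o integral. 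Its kernel is
$$2T\,T^{\gamma(d+1)}|\mathcal{K}|\,\widehat{1}_{\mathbb{T}(1)}(T(\omega_{1}+\omega_{2}))\,\widehat{1}_{\mathcal{K}}(T^{\gamma}(\boldsymbol{\lambda}_{1}+\boldsymbol{\lambda}_{2}))\prod_{j}f_{Z}^{1/2}.$$

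Next we rescale the frequencies by $\boldsymbol{\lambda}_{j}\mapsto T^{-\gamma}\boldsymbol{\lambda}_{j}$ and $\omega_{j}\mapsto T^{-1}\omega_{j}$. By self-similarity of $W$, its law changes by the factor $T^{-\gamma(d+1)/2}T^{-1/2}$ per variable. After dividing by the normalization, $S_{T}$ equals in distribution a double integral $\int'' Q_{T}\,dW\,dW$. Its kernel $Q_{T}$ is the limit kernel $Q_{\infty}$ of $S_{\infty}$ (with the constant $c(\alpha_{S},\alpha_{T},d+1)$ made explicit) multiplied by the ratio
$$\prod_{j=1}^{2}\left[\frac{f_{Z}(\|T^{-\gamma}\boldsymbol{\lambda}_{j}\|,T^{-1}\omega_{j})}{T^{\gamma(d+1-\alpha_{S})+(1-\alpha_{T})}\mathcal{L}_{S}(T^{\gamma})\mathcal{L}_{T}(T)\,c\,\|\boldsymbol{\lambda}_{j}\|^{-(d+1-\alpha_{S})}|\omega_{j}|^{-(1-\alpha_{T})}}\right]^{1/2}.$$
Here we use that the spectral constant $c$ relates the covariance power law to $f_{Z}$, and we absorb it so that \textbf{B1} makes this ratio tend to $1$ pointwise. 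Since both variables live in the same second chaos driven by one $W$, the isometry gives
$$E[S_{T}-S_{\infty}]^{2}=2\|Q_{T}-Q_{\infty}\|^{2}_{L^{2}}.$$
It therefore suffices to prove $Q_{T}\to Q_{\infty}$ in $L^{2}((\mathbb{R}^{d+1}\times\mathbb{R})^{2})$.

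Pointwise convergence follows from \textbf{B1}. First we check that $Q_{\infty}\in L^{2}$. By Parseval,
$$\|Q_{\infty}\|^{2}\propto\int_{(\mathcal{K}\times\mathbb{T}(1))^{2}}\|\mathbf{x}-\mathbf{y}\|^{-2\alpha_{S}}|t-s|^{-2\alpha_{T}}\,d\mathbf{x}\,d\mathbf{y}\,dt\,ds,$$
which is finite because $2\alpha_{S}<d+1$ and $2\alpha_{T}<1$. This is the same trace-class fact for $\mathcal{K}^{2}_{\alpha_{S},\alpha_{T}}$ used in the proof of Theorem \ref{pr1}.

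The main obstacle is $L^{2}$ domination, and we split the frequency domain. On a fixed neighbourhood of zero frequency, \textbf{B2} (monotonicity) together with the Potter bounds for the slowly varying functions $\mathcal{L}_{S},\mathcal{L}_{T}$ gives $|Q_{T}|\le C\,|Q_{\infty}^{(\delta)}|$ on $\{\|T^{-\gamma}\boldsymbol{\lambda}_{j}\|,|T^{-1}\omega_{j}|\le\delta\}$. Here $Q_{\infty}^{(\delta)}$ is $Q_{\infty}$ with exponents perturbed by a small $\delta'>0$; it remains in $L^{2}$ because the inequalities $2\alpha_{S}<d+1$ and $2\alpha_{T}<1$ are strict. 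Dominated convergence then applies on this region. On the complement, we undo the rescaling and bound the contribution by the tail of the $L^{2}$ norm of the original kernel. That tail is controlled by $\int f_{Z}=1$ (unit variance), times $T^{\gamma(d+1)+1}$ from the Fej\'er-type kernel $|\widehat{1}(T\cdot)|^{2}$, divided by the normalization squared. This contribution vanishes because $\alpha_{S}<(d+1)/2$ and $\alpha_{T}<1/2$. An alternative is to compare second moments directly: show $E S_{T}^{2}\to E S_{\infty}^{2}$ via the covariance-domain Riesz limit, and $E S_{T}S_{\infty}\to E S_{\infty}^{2}$ via Fatou plus pointwise convergence. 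This yields $\|Q_{T}-Q_{\infty}\|\to0$ by expanding the square, and we would use it if the domination bounds prove awkward.
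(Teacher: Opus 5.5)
Your skeleton matches the paper's. You realize (a version in law of) $S_T$ as a second-chaos integral with the rescaled kernel, and use the isometry to reduce $E[S_T-S_\infty]^2$ to an $L^2$ distance between kernels. Pointwise convergence comes from \textbf{B1}, and dominated convergence closes the argument using finiteness of $\int\!\!\int\|\mathbf{x}-\mathbf{y}\|^{-2\alpha_S}|t-s|^{-2\alpha_T}$ over $(\mathcal{K}\times\mathbb{T}(1))^2$. You are also right that the factor $c(\alpha_S,\alpha_T,d+1)$ in $S_\infty$ appears only if the limit in \textbf{B1} carries the Tauberian constant; as written, it does not.

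The real difference is the domination step. The paper does it in one line. It asserts that its relative-error factor (called $Q_T$ there), $\bigl[\prod_j r_j^{1/2}-1\bigr]^2$ with $r_j=f_Z(\|T^{-\gamma}\boldsymbol{\lambda}_j\|,T^{-1}\omega_j)\big/\bigl(d_T\|\boldsymbol{\lambda}_j\|^{-(d+1-\alpha_S)}|\omega_j|^{-(1-\alpha_T)}\bigr)$, is $\le 1$ uniformly in frequency for large $T$, so the limit integrand itself dominates. That uniform bound does not follow from a pointwise limit: with a logarithmic $\mathcal{L}_S$, the ratio $\mathcal{L}_S(T^{\gamma}/\|\boldsymbol{\lambda}\|)/\mathcal{L}_S(T^{\gamma})$ is unbounded as $\|\boldsymbol{\lambda}\|\to 0$ for every $T$. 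Your Potter bound with slightly perturbed exponents is the standard way to make this step legitimate. It stays in $L^2$ precisely because $2\alpha_S<d+1$ and $2\alpha_T<1$ are strict. It does, however, presuppose a uniform local product representation $f_Z(\boldsymbol{\lambda},\omega)\asymp\|\boldsymbol{\lambda}\|^{-(d+1-\alpha_S)}|\omega|^{-(1-\alpha_T)}\mathcal{L}_S(1/\|\boldsymbol{\lambda}\|)\mathcal{L}_T(1/|\omega|)$ on the $\delta$-box. \textbf{B1} controls $f_Z$ only along each anisotropic curve $T\mapsto(T^{-\gamma}\boldsymbol{\lambda},T^{-1}\omega)$ separately. \textbf{B2} bounds $f_Z$ only by its values nearer the origin. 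Neither supplies that uniformity, so you should say explicitly that you are strengthening the hypotheses.

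The step that actually fails as written is your far-region estimate. To get ``$\int f_Z=1$ times $T^{\gamma(d+1)+1}$'' you must bound one factor $f_Z$ by a constant off the box $\{\|\boldsymbol{\lambda}_j\|,|\omega_j|\le\delta\}$. But that complement contains points where only one coordinate is large, and there $f_Z$ need not be bounded. For an asymptotically separable model $f_Z=f_Sf_T$, exactly the kind of model \textbf{B1} describes, $f_Z(\boldsymbol{\lambda},\omega)\to\infty$ as $\omega\to 0$ for every fixed $\boldsymbol{\lambda}$. For the same reason you cannot collect both gains at once. On $\{\|\boldsymbol{\lambda}_1\|>\delta\}$ the temporal factor still contributes its full order $T^{2(1-\alpha_T)}\mathcal{L}_T^2(T)$. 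The contribution there is therefore only $O\bigl(T^{-\gamma(d+1-2\alpha_S)}\bigr)$ up to slowly varying factors, which vanishes only if $\gamma>0$. (Your scheme needs $\gamma>0$ anyway, since otherwise the box does not exhaust the rescaled spatial frequencies.) On $\{|\omega_1|>\delta\}$ you gain only $O\bigl(T^{-(1-2\alpha_T)}\bigr)$.

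The repair is to split the complement according to which coordinate exceeds $\delta$ and use only that gain. This works under an envelope $f_Z(\boldsymbol{\lambda},\omega)\le C\,g_S(\boldsymbol{\lambda})g_T(\omega)$, with $g_S,g_T$ integrable, bounded away from the origin, and having at most the singularities in \textbf{B1}; again, this is more than \textbf{B1}--\textbf{B2}.

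Your fallback is logically sound: a.e.\ convergence plus $\|Q_T\|\to\|Q_\infty\|$, with Fatou for the cross term, which is legitimate because your kernels satisfy $Q_T\overline{Q_\infty}\ge 0$. But proving $E S_T^2\to E S_\infty^2$ in the covariance domain uses \textbf{A1}/\textbf{A2}-type hypotheses, not \textbf{B1}--\textbf{B2}.

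Finally, as in the paper, what you actually prove is $L^2$ convergence of a version $\widetilde S_T\overset{D}{=}S_T$ obtained from the self-similarity of $W$. That gives convergence in distribution of $S_T$, not mean-square convergence of $S_T$ itself.
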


 \begin{proof}
  Under \textbf{B1}--\textbf{B2}, for $T$ sufficiently large, $$Q_{T}(\boldsymbol{\lambda }_{1}, \boldsymbol{\lambda }_{2},\omega_{1},\omega_{2})=\left[\prod_{j=1}^{2}d_{T}^{-1}\frac{f_{Z}^{1/2}(\|T^{-\gamma }\boldsymbol{\lambda }_{j}\|,T^{-1}\omega_{j} )}{\left[\|\boldsymbol{\lambda_{j}}\|^{(d+1-\alpha_{S})/2}|\omega_{j} |^{(1-\alpha_{T})/2}\right]^{-1}} -1\right]^{2}\leq 1,$$
  \noindent where $d_{T}=\mathcal{L}_{S}(T^{\gamma })T^{\gamma (d+1-\alpha_{S})}\mathcal{L}_{T}(T)T^{1-\alpha_{T}}.$
 Hence, from the asymptotic spectral properties of Riesz kernel  restricted to a compact set (see \cite{Dostanić98}  and \cite{Widom63}),
  \begin{eqnarray}
  &&\hspace*{-1cm}E\left[S_{T}-S_{\infty}\right]^{2}=\int_{\mathbb{R}^{2(d+1)}\times \mathbb{R}^{2}}
  \left|\widehat{1}_{\mathbb{T}(1)}(\omega_{1}+\omega_{2})\widehat{1}_{\mathcal{K}}(\boldsymbol{\lambda}_{1}+\boldsymbol{\lambda}_{2})
  \right|^{2}
  \prod_{j=1}^{2}\|\boldsymbol{\lambda}_{j} \|^{- (d+1-\alpha_{S})}
  \nonumber\\
  &&
  \hspace*{1.75cm} \times [2|\mathcal{K}|c(d+1,\alpha_{S},\alpha_{T})]^{2} |\omega_{j} |^{-(1-\alpha_{T})}Q_{T}(\boldsymbol{\lambda }_{1}, \boldsymbol{\lambda }_{2},\omega_{1},\omega_{2})\prod_{j=1}^{2}d\boldsymbol{\lambda }_{j}d\omega_{j}\nonumber\\
  &&\leq  [2|\mathcal{K}|c(d+1,\alpha_{S},\alpha_{T})]^{2}\int_{\mathbb{R}^{2(d+1)}\times \mathbb{R}^{2}}
   \left|\widehat{1}_{\mathbb{T}(1)}(\omega_{1}+\omega_{2})\widehat{1}_{\mathcal{K}}(\boldsymbol{\lambda}_{1}+\boldsymbol{\lambda}_{2})
  \right|^{2}\nonumber\\
  &&\hspace*{3cm}\times \prod_{j=1}^{2}\|\boldsymbol{\lambda}_{j}\|^{-(d+1-\alpha_{S})}|\omega_{j} |^{-(1-\alpha_{T})}\prod_{j=1}^{2}d\boldsymbol{\lambda }_{j}d\omega_{j}
  \nonumber\\
  &&= \int_{\mathbb{T}(1)\times \mathbb{T}(1)}\int_{\mathcal{K}\times \mathcal{K} }\frac{1}{|t-s|^{2\alpha_{T}}\|\mathbf{x}-\mathbf{y}\|^{2\alpha_{S}}}d\nu_{\mathcal{K}}(\mathbf{x})d\nu_{\mathcal{K}}(\mathbf{y})dtds <\infty,
  \end{eqnarray}

  \noindent since  $\alpha_{S}\in (0, (d+1)/2),$ and $\alpha _{T}\in (0,1/2)$  under \textbf{B1}. Thus,  the corresponding trace norm of this operator
  is finite  on $L^{2}(\mathcal{K}\times \mathbb{T}(1))$.
  Dominated Convergence Theorem then   leads to the desired result as  $T\to \infty.$
 \end{proof}
\section{Mean--square convergence  for STRFs on $\mathbb{S}_{d}(1)$ }
\label{mssphere}
Through this section  we will work under \textbf{Assumptions A0--A2}. In particular, equations (\ref{KLexp}) and (\ref{Ysquare}) hold. The next result provides the convergence in the mean--square sense of $S_{T},$ introduced in equation  (\ref{eq2}), to $S_{\infty}$  in Theorem \ref{pr1}. The series expansion of $S_{\infty }$ is also provided.

\begin{theorem}
\label{thMS}
Under  \textbf{Assumptions A0--A2}, the functional $S_{T}$ in  (\ref{eq2})  also converges in the mean--square sense to  $S_{\infty}$ with characteristic function (\ref{chf}). Furthermore,
$S_{\infty}$ admits the following series representation
\begin{eqnarray}
S_{\infty}\underset{\mathcal{L}^{2}(\Omega,\mathcal{A},P)}{=}\sum_{n=0}^{\infty }
\widetilde{B}_{n}(0)\sum_{j=1}^{\Gamma (n,d)}\sum_{k\geq 1}\widetilde{B}_{k}\eta^{2}_{n,j,k}-\frac{4\pi^{\frac{d+1}{2}}}{\Gamma \left(%
\frac{d+1}{2}\right)},\label{mslimit}
\end{eqnarray}
\noindent where, as before, $\left\{ \eta_{n,j,k}, \  k\geq 1, \  j=1,\dots,\Gamma (n,d),\ n\geq 0\right\}$ defines a sequence of independent and identically distributed standard Gaussian random variables, and $\left\{ \widetilde{B}_{n}(0),\ n\geq 0\right\}$ and $\left\{\widetilde{B}_{k},\ k\geq 1\right\},$ respectively satisfy
\begin{eqnarray}
&& \int_{\mathbb{S}_{d}(1)}\frac{1}{\|\mathbf{x}-\mathbf{y}\|^{\alpha_{S}}}S_{n,j}^{d}(\mathbf{y})d\nu(\mathbf{y})=
\widetilde{B}_{n}(0)S_{n,j}^{d}(\mathbf{x}),\
\mathbf{x}\in \mathbb{S}_{d}(1)
\label{eig1}\\
&&\int_{\mathbb{T}(1)}\frac{1}{|t-s|^{\alpha_{T}}}\phi_{k}(s)ds=\widetilde{B}_{k}\phi_{k}(t),\ t\in \mathbb{T}(1).
\label{eig2}\end{eqnarray}
\noindent Hence,
\begin{eqnarray}&&\sum_{n\geq 0}\Gamma(n,d)[\widetilde{B}_{n}(0)]^{2}\sum_{k\geq 1}[\widetilde{B}_{k}]^{2}
=\int_{\mathbb{S}_{d}^{2}(1)\times \mathbb{T}^{2}(1)}k^{\star (2)}_{\alpha_{S},\alpha_{T}}(\mathbf{x},\mathbf{y},t,s)d\nu(\mathbf{x})d\nu(\mathbf{y})dt ds
\nonumber\\
&&
=\left\|\mathcal{K}^{2}_{\alpha_{S},\alpha_{T}}\right\|_{L^{1}(L^{2}(\mathbb{S}_{d}(1)\times \mathbb{T}(1),d\nu\otimes dt))}<\infty,\label{tr2}\end{eqnarray}
\noindent where kernel $k^{\star (2)}_{\alpha_{S},\alpha_{T}}$ of integral operator $\mathcal{K}^{2}_{\alpha_{S},\alpha_{T}}$ has been introduced in equation (\ref{keyop}).
\end{theorem}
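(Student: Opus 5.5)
The plan is to work entirely in the pure point spectral domain. First, realize $S_T$ and the candidate limit $S_\infty$ on a common probability space through the same i.i.d. standard Gaussian array $\{\eta_{n,j,k}\}$. Second, reduce the mean-square error to a single trace-type quantity, and show that it tends to zero by the Dominated Convergence argument already used in the proof of Theorem \ref{pr1}.

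From (\ref{KLexp}) and (\ref{Ysquare}), after centering, $S_T$ has the representation
\[
S_T=\sum_{n\geq 0}\sum_{j=1}^{\Gamma(n,d)}\sum_{k\geq 1}\frac{B_{n}(0,T^{\gamma})B_{n,k}(T)}{d_T}\left(\eta_{n,j,k}^{2}-1\right).
\]
After the homothetic change of variables $\mathbf{x}\mapsto T^{\gamma}\mathbf{x}$, $t\mapsto Tt$, the restricted kernel $C_{Z}(T^{\gamma}\cdot,T\cdot)/[\mathcal{L}_1(T)\mathcal{L}_2(T^\gamma)T^{-\gamma\alpha_S}T^{-\alpha_T}]$ is still isotropic on $\mathbb{S}_d(1)$ and stationary on $\mathbb{T}(1)$. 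It is therefore diagonalized by the same spherical harmonics $S^{d}_{n,j}$, and, under the separability implicit in \textbf{Assumption A0}, by a temporal basis. The rescaled eigenvalues $\lambda_{n,k}(T):=B_{n}(0,T^{\gamma})B_{n,k}(T)/d_T$ are the eigenvalues of the rescaled restricted covariance operator $\mathcal{R}_T$ on $L^{2}(\mathbb{S}_d(1)\times\mathbb{T}(1))$.

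By \textbf{Assumption A1}, $\mathcal{R}_T$ converges pointwise in kernel to the separable Riesz kernel $\|\mathbf{x}-\mathbf{y}\|^{-\alpha_S}|t-s|^{-\alpha_T}$. The eigenfunctions of this limit are $S^d_{n,j}\otimes\phi_k$, with eigenvalues $\widetilde{B}_n(0)\widetilde{B}_k$ given by (\ref{eig1})--(\ref{eig2}). I would define $S_\infty$ by (\ref{mslimit}), using the same $\eta_{n,j,k}$ and the temporal limit basis $\phi_k$ (identifying $\phi_k^{\mathbb{T}(T)}$ after rescaling with $\phi_k$, which is where separability enters). The constant in (\ref{mslimit}) should then be read as the trace $\sum_{n}\Gamma(n,d)\widetilde{B}_n(0)\sum_k\widetilde{B}_k=|\mathbb{S}_d(1)|\,|\mathbb{T}(1)|$ times the diagonal value, so that $S_\infty$ is centered. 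Identity (\ref{tr2}) follows from Parseval and the spectral theorem applied to the kernel $k^{\star(2)}_{\alpha_S,\alpha_T}$. Its finiteness is the Riesz-kernel integrability already quoted in the proof of Theorem \ref{pr1} ($2\alpha_S<d$, $2\alpha_T<1$). This also ensures that (\ref{mslimit}) converges in $\mathcal{L}^{2}(\Omega,\mathcal{A},P)$.

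By independence and $\mathrm{Var}(\eta^2)=2$,
\[
E[S_T-S_\infty]^2=2\sum_{n}\Gamma(n,d)\sum_{k}\left(\lambda_{n,k}(T)-\widetilde{B}_n(0)\widetilde{B}_k\right)^2=2\left\|\mathcal{R}_T-\mathcal{K}_{\alpha_S,\alpha_T}\right\|_{HS}^{2},
\]
where the last equality uses the common eigenbasis. The Hilbert--Schmidt norm equals the $L^2$ norm of the kernel difference on $(\mathbb{S}_d(1)\times\mathbb{T}(1))^2$. This difference tends to zero pointwise by \textbf{Assumption A1} and (\ref{eq2b}). It is dominated, via \textbf{Assumption A2} with $m=2$ (equivalently, Potter-type bounds), by a constant multiple of $k^{\star(2)}_{\alpha_S,\alpha_T}$, which is integrable by (\ref{tr2}). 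Dominated Convergence then gives $E[S_T-S_\infty]^2\to0$. Since mean-square convergence implies convergence in law, Theorem \ref{pr1} identifies the characteristic function of $S_\infty$ as (\ref{chf}). As a consistency check, the cumulants of (\ref{mslimit}) are exactly the $c_m$ in (\ref{eqcoefchfbcc}), by the trace identity $c_m=\|\mathcal{K}^m_{\alpha_S,\alpha_T}\|_1$.

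The main obstacle is the coupling step. Because $\mathcal{R}_T$ is isotropic, its spatial eigenfunctions are exactly $S^{d}_{n,j}$, but its temporal eigenfunctions $\phi^{\mathbb{T}(T)}_k$ need not coincide with the Riesz eigenfunctions $\phi_k$. If they differ, the variance identity above breaks, and one must instead write $S_\infty$ as the quadratic form $\langle \mathcal{K}_{\alpha_S,\alpha_T}W,W\rangle$ in the same isonormal field $W$ generating $\eta_{n,j,k}$. One then bounds $E[S_T-S_\infty]^2=2\|\mathcal{R}_T-\mathcal{K}_{\alpha_S,\alpha_T}\|_{HS}^2$ directly, which still holds for general quadratic Wiener chaos forms. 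I would therefore phrase the argument through Hilbert--Schmidt norms of kernels from the outset, so that no eigenbasis alignment is needed, and use the eigen-expansion only to obtain the series form (\ref{mslimit}).
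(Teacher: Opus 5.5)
Your argument is sound. At the decisive step, though, it takes a different route from the paper.

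\textbf{How the two routes differ.} The paper works with eigenvalues. It writes $E[(S_T-S_\infty)^2]$ as $2\sum_{n}\Gamma(n,d)\sum_{k}[B_n(0,T^{\gamma})B_{n,k}(T)/d_T-\widetilde{B}_n(0)\widetilde{B}_k]^2$ plus a squared mean term that it declares to vanish. It bounds this sum by $2(\sqrt{\mathcal{M}}+1)^2\|\mathcal{K}^{2}_{\alpha_S,\alpha_T}\|_{L^{1}}$, using the Hilbert--Schmidt triangle inequality and \textbf{Assumption A2}, and then invokes Dominated Convergence.

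You instead couple $S_T$ and $S_\infty$ through a single isonormal field on $L^2(\mathbb{S}_d(1)\times\mathbb{T}(1))$. Let $R_T$ be the rescaled restricted kernel and $K(\mathbf{x},\mathbf{y},t,s)=\|\mathbf{x}-\mathbf{y}\|^{-\alpha_S}|t-s|^{-\alpha_T}$. The second-chaos isometry makes the error equal to $2\|R_T-K\|^2_{L^2}$, and you apply Dominated Convergence to kernels on $(\mathbb{S}_d(1)\times\mathbb{T}(1))^2$.

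Your route buys two things:
\begin{itemize}
\item You need no alignment of the rescaled $\phi^{\mathbb{T}(T)}_k$ with the Riesz eigenfunctions $\phi_k$. The paper silently uses the same $\eta_{n,j,k}$ for every $T$ and for the limit.
\item Convergence to zero reduces to pointwise convergence of kernels, which is what \textbf{Assumption A1} provides.
\end{itemize}
The paper's final display only shows that $E[(S_T-S_\infty)^2]$ stays bounded. A bound on the whole sum is not a summable majorant in $(n,k)$, and termwise convergence of the eigenvalues is never shown. If you want the eigenvalue coupling instead, the Hoffman--Wielandt inequality (eigenvalues in decreasing order, block by block in $n$) bounds the paper's sum by $\|R_T-K\|^2_{L^2}$, so it reduces to your estimate.

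\textbf{Two points need tightening.}

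(i) Centering. The Riesz kernel is infinite on the diagonal, and $\widetilde{B}_k\asymp k^{-(1-\alpha_T)}$ (Dostani\'c) is not summable. Hence $\sum_n\Gamma(n,d)\widetilde{B}_n(0)\sum_k\widetilde{B}_k=+\infty$, and reading the constant as that trace does not give a finite number. Equation (\ref{mslimit}) only makes sense as $\sum_{n,j,k}\widetilde{B}_n(0)\widetilde{B}_k(\eta^2_{n,j,k}-1)$, which is exactly the object your variance identity uses. The same divergence is why the paper's cancellation of the mean term via the unit variance of $Z$ is not literally valid.

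(ii) The majorant. \textbf{Assumption A2} is an inequality between integrals, not a pointwise bound, and it is not equivalent to Potter's bounds. Together with a.e.\ convergence it yields only $\sup_T\|R_T\|_{L^2}<\infty$, hence weak convergence of $R_T$ to $K$ in $L^2$. You can finish in either of two ways:
\begin{itemize}
\item Combine Potter's bounds with a global (not merely asymptotic) separable bound on $|C_Z|$ and with $|C_Z|\le 1$. This gives $|R_T|\le C\|\mathbf{x}-\mathbf{y}\|^{-\alpha_S-\delta}|t-s|^{-\alpha_T-\delta}$, which is square integrable for small $\delta>0$ because $2\alpha_S<d$ and $2\alpha_T<1$ are strict. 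The resulting majorant is $k^{\star(2)}_{\alpha_S,\alpha_T}$ with exponents enlarged by $2\delta$, not a constant multiple of it.
\item Use $\|R_T\|^2_{L^2}=c_2(T)\to c_2=\|K\|^2_{L^2}$, as asserted via (\ref{tracef}) in the proof of Theorem \ref{pr1}. Weak convergence plus convergence of norms then gives $\|R_T-K\|_{L^2}\to 0$.
\end{itemize}
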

\begin{remark}
\label{rr1}
The asymptotic behavior of $\left\{\widetilde{B}_{k},\ k\geq 1\right\}$ satisfying eigenequation (\ref{eig2}) was characterized in \cite{Dostanić98}. Furthermore, a similar asymptotic behavior, depending on the topological  dimension $d,$ is displayed  by $\left\{\widetilde{B}_{n}(0),\ n\geq 0\right\}$ in equation (\ref{eig1}) (see also \cite{Widom63}).
\end{remark}
\begin{proof}
From equations (\ref{KLexp}), (\ref{Ysquare}) and (\ref{mslimit}), under \textbf{Assumptions A0-A1},
\begin{eqnarray}
&&E\left[(S_{T}-S_{\infty})^{2}\right]= E\left[\left(\sum_{n=0}^{\infty}\sum_{j=1}^{\Gamma (n,j)}\sum_{k\geq 1}
\left[\frac{B_{n}(0,T^{\gamma })B_{n,k}(T)}{d_{T}}-\widetilde{B}_{n}(0)\widetilde{B}_{k}\right]\eta _{n,j,k}^{2}\right.\right.\nonumber\\
&&\hspace*{5cm}\left.\left.-\left[\frac{|\mathbb{S}_{d}(T^{\gamma })||\mathbb{T}(T)|}{d_{T}}-|\mathbb{S}_{d}(1)||\mathbb{T}(1)|\right]\right)^{2}\right]\nonumber\\
&&=\sum_{n=0}^{\infty}\sum_{j=1}^{\Gamma (n,j)}\sum_{k\geq 1}
\left[\frac{B_{n}(0,T^{\gamma })B_{n,k}(T)}{d_{T}}-\widetilde{B}_{n}(0)\widetilde{B}_{k}\right]^{2}E\left[\eta _{n,j,k}^{4}\right]
\nonumber\\
&&-2\left(\sum_{n=0}^{\infty}\sum_{j=1}^{\Gamma (n,j)}\sum_{k\geq 1}
\left[\frac{B_{n}(0,T^{\gamma })B_{n,k}(T)}{d_{T}}-\widetilde{B}_{n}(0)\widetilde{B}_{k}\right]E\left[\eta _{n,j,k}^{2}\right]\right)\nonumber\\&&\hspace*{0.1cm}\times \left[\frac{|\mathbb{S}_{d}(T^{\gamma })||\mathbb{T}(T)|}{d_{T}}-|\mathbb{S}_{d}(1)||\mathbb{T}(1)|\right]+\left[\frac{|\mathbb{S}_{d}(T^{\gamma })||\mathbb{T}(T)|}{d_{T}}-|\mathbb{S}_{d}(1)||\mathbb{T}(1)|\right]^{2}\nonumber\\
&&\leq 2\sum_{n=0}^{\infty}\Gamma (n,j)\sum_{k\geq 1}
\left[\frac{B_{n}(0,T^{\gamma })B_{n,k}(T)}{d_{T}}-\widetilde{B}_{n}(0)\widetilde{B}_{k}\right]^{2}\nonumber\\
&&+\left[  \sum_{n=0}^{\infty}\Gamma (n,j)\sum_{k\geq 1}
\left[\frac{B_{n}(0,T^{\gamma })B_{n,k}(T)}{d_{T}}-\widetilde{B}_{n}(0)\widetilde{B}_{k}\right]\right.\nonumber\\
&&\hspace*{4cm}\left.-\left[\frac{|\mathbb{S}_{d}(T^{\gamma })||\mathbb{T}(T)|}{d_{T}}-|\mathbb{S}_{d}(1)||\mathbb{T}(1)|\right]\right]^{2}\nonumber\\
&&=2\sum_{n=0}^{\infty}\Gamma (n,j)\sum_{k\geq 1}
\left[\frac{B_{n}(0,T^{\gamma })B_{n,k}(T)}{d_{T}}-\widetilde{B}_{n}(0)\widetilde{B}_{k}\right]^{2},
\label{fourthmtrc}
\end{eqnarray}
\noindent where we have applied $\|\mathcal{A}^{m}\|_{L^{1}(H)}\leq \|\mathcal{A}\|_{L^{1}(H)}^{m},$ for $m=2,$ with $\mathcal{A}$ being
a trace operator on the separable Hilbert space $H,$ and that the unrestricted STRF $Z$ has variance one (see also (\ref{tracef})).  Also, from (\ref{tr2}), where the asymptotic spectral properties of Riesz kernel restricted to a compact set  are applied (see Remark \ref{rr1}),
\begin{eqnarray}\sum_{n=0}^{\infty}\Gamma (n,j)\sum_{k\geq 1}\left[\widetilde{B}_{n}(0)\widetilde{B}_{k}\right]^{2}
&=& \left\|\mathcal{K}^{2}_{\alpha_{S},\alpha_{T}}\right\|_{L^{1}(L^{2}(\mathbb{S}_{d}(1)\times \mathbb{T}(1),d\nu\otimes dt))}<\infty,
\label{fourthmtrcb}
\end{eqnarray}
\noindent  \noindent which implies, under \textbf{Assumption A2}, that  \begin{eqnarray}&&\sum_{n=0}^{\infty}\Gamma (n,j)\sum_{k\geq 1}
\left[\frac{B_{n}(0,T^{\gamma })B_{n,k}(T)}{d_{T}}\right]^{2}<\infty.\label{fourthmtrcb2}
\end{eqnarray}

Thus, from equation (\ref{fourthmtrc}), applying   triangle inequality  in terms of  the Hilbert- Schmidt operator norm,  keeping in mind \textbf{Assumption A2},
\begin{eqnarray}&&
E\left[(S_{T}-S_{\infty})^{2}\right]\leq 2\sum_{n=0}^{\infty}\Gamma (n,j)\sum_{k\geq 1}
\left[\frac{B_{n}(0,T^{\gamma })B_{n,k}(T)}{d_{T}}-\widetilde{B}_{n}(0)\widetilde{B}_{k}\right]^{2}\nonumber\\
&&\leq 2\left[\left(\sum_{n=0}^{\infty}\Gamma (n,j)\sum_{k\geq 1}
\left[\frac{B_{n}(0,T^{\gamma })B_{n,k}(T)}{d_{T}}\right]^{2}\right)^{1/2}\right.\nonumber\\
&&\left.\hspace*{3cm}+\left(\sum_{n=0}^{\infty}\Gamma (n,j)\sum_{k\geq 1}\left[\widetilde{B}_{n}(0)\widetilde{B}_{k}\right]^{2}\right)^{1/2}\right]^{2} \nonumber\\
&&\leq  2(\sqrt{\mathcal{M}}+1)^{2}\left\|\mathcal{K}^{2}_{\alpha_{S},\alpha_{T}}\right\|_{L^{1}(L^{2}(\mathbb{S}_{d}(1)\times \mathbb{T}(1),d\nu\otimes dt))}<\infty.
\label{fourthmtrcbv}
\end{eqnarray}

\noindent From equation  (\ref{fourthmtrcbv}), Dominated Convergence Theorem   leads to  the convergence, in the mean-square sense, of $S_{T}$ to $S_{\infty}$ in  equation (\ref{mslimit}).
\end{proof}
\section{Final Comments}
\label{discussion}
The spatiotemporal  noncentral limit theorems derived in this paper characterize the asymptotic probability distribution of functionals of nonlinear transformations of Gaussian STRFs with Hermite rank equal to two. Under the same setting of conditions, similar results can be obtained  in the first Laguerre Chaos,
for chi--squared subordinated STRFs. Furthermore, in relation to the second Laguerre Chaos, Theorem 3 in \cite{Leonenkoetal17}  can be extended to  the context of LRD spatially homogeneous and isotropic, and stationary in time  chi--squared  STRFs subordinators,  by applying similar approaches to the ones adopted   in this paper (see also \cite{Caponera25} for the case of connected and compact two point homogeneous spaces  $\mathbb{M}_{d}$).
\subsection*{Acknowledgements}
This work has been partially supported by grants  \linebreak PID2022-142900NB-I00, funded by MICIU / AEI/10.13039/501100011033/ ERDF, EU, and  CEX2020-001105-M, funded by MICIU / AEI/10.13039/501100011033.

\bibliographystyle{amsplain}

\end{document}